\def\titlerunning#1{\gdef\titrun{#1}}
\def\hh{{\mathcal H}^{n-1}}
\def\rn{{{\mathbb R}^n}}
\def\author#1{\gdef\autrun{\def\and{\unskip, }#1}\gdef\@author{#1}}
\def\address#1{{\def\and{\\\hspace*{15.6pt}}\renewcommand{\thefootnote}{}\footnote{#1}}\markboth{\autrun}{\titrun}}
\def\email#1{email: \href{mailto:#1}{#1} }
\newenvironment{dedication}{\itshape\center}{\par\medskip}
\newenvironment{acknowledgments}{\bigskip\small\noindent\textit{Acknowledgments.}}{\par}
\newtheorem{thm}{Theorem}[section]
\newtheorem{prob}[thm]{Problem}
\newtheorem{mainthm}[thm]{Main Theorem}
\theoremstyle{definition}
\newtheorem{defin}[thm]{Definition}
\newtheorem*{rem}{Remark}
\numberwithin{equation}{section}
\begin{document}

\titlerunning{Friedrichs  inequalities}

\title{\textbf{Friedrichs type inequalities \\ in arbitrary domains}}

\author{Andrea Cianchi \and Vladimir Maz'ya}

\date{}

\maketitle

\address{A. Cianchi:     Dipartimento di Matematica e Informatica \lq\lq U.Dini", Universit\`a di Firenze, Viale Morgagni 67/a 50134, Firenze (Italy); \email{andrea.ciamchi@unifi.it} \and V. Maz'ya: 
 Department of Mathematics, Link\"oping University, SE-581 83
Link\"oping, Sweden, and 
  Department of Mathematical Sciences, M\&O Building, 
University of Liverpool, Liverpool L69 3BX
 (UK), and 
 RUDN University, 
6 Miklukho-Maklay St, Moscow, 117198 (Russia);
\email{vladimir.mazya@liu.se}}

\begin{dedication}
To Ari Laptev on the occasion of his 70th birthday
\end{dedication}


\section{Introduction}\label{S:introd}

Almost one century ago, in his paper  \cite{Friedrichs}  on boundary value and eigenvalue problems in the elasticity theory of plates,  K.Friedrichs showed that, given  a  bounded open set  $\Omega \subset \mathbb R^2$ satisfying a very mild regularity condition on the boundary $\partial \Omega$, there exists a constant $C=C(\Omega)$ such that
\begin{equation}\label{friedineq}
\|u\|_{L^2(\Omega)} \leq C\big(\|\nabla u\|_{L^2(\Omega)} 
+ \|u\|_{L^2(\partial \Omega)}\big)
\end{equation}
for every sufficiently smooth function $u: \overline \Omega \to \mathbb R$. Here, $ \|\cdot\|_{L^2(\partial \Omega)}$ stands for the $L^2$-norm on $\partial \Omega$ with respect to the $(n-1)$-dimensional Hausdorff measure. As noticed in the same paper, an application of inequality \eqref{friedineq} to the first-order derivatives of $u$ yields 
\begin{equation}\label{friedineq2}
\|\nabla u\|_{L^2(\Omega)} \leq C\big(\|\nabla^2 u\|_{L^2(\Omega)} 
+ \|\nabla u\|_{L^2(\partial \Omega)}\big)
\end{equation}
for some constant $C=C(\Omega)$. 

Inequality \eqref{friedineq} can be regarded as a forerunner of inequalities in Sobolev type spaces involving trace norms over the boundary of the underlying domain.  The latter play a role in a variety of problems in the theory of partial differential equations, including
 the analysis of solutions to elliptic equations subject to Robin boundary conditions. Criteria for the validity of inequalities of the form
\begin{equation}\label{mazyaineq}
\|u\|_{L^q(\Omega)} \leq C_1\|\nabla u\|_{L^p(\Omega)} + C_2
 \|u\|_{L^r(\partial \Omega)},
\end{equation}
for some exponents $p,q,r \geq 1$,  in terms of isoperimetric or isocapacitary type inequalities relative to $\Omega$, can be found in \cite[Section 6.11]{Mabook}. As pointed out in the same section, from those criteria   necessary and sufficient conditions on $\Omega$ follow for the solvability of the problem
\begin{equation}\label{mazyaeq}
\begin{cases}
- {\rm div}(A(x) \nabla u) = 0 & \quad \text{in $\Omega$}
\\ A(x)\nabla u \cdot \nu + \lambda u = \varphi   & \quad \text{on $\partial \Omega$.}
\end{cases}
\end{equation}
Here, $A$ is an $n\times n$-matrix-valued function in $L^\infty(\Omega)$, such that $A(x)$ is symmetric and positive definite, with lowest eigenvalue unformly bounded away from $0$, the function $\varphi \in L^\sigma (\partial \Omega)$ for some $\sigma \in (1, 2]$, $\nu$ is the  normal outward unit vector on $\partial \Omega$ and $\lambda >0$. Criteria for the discreteness of the spectrum of the operator in the first equation in \eqref{mazyaeq} are also provided.

Remarkably,  no   regularity on $\Omega$ is needed for inequality \eqref{mazyaineq} to hold,  provided  that the exponents $p,q,r \geq 1$ are suitably related. If $1\leq p<n$, $r= \frac{p(n-1)}{n-p}$ and $q=\frac {pn}{n-p}$, then inequality  \eqref{mazyaineq} holds for   any open set $\Omega \subset\rn$, with constants $C_1$ and $C_2$ independent  of $\Omega$. In this case, the optimal constants $C_1$ and $C_2$ are also known. They appeared in 1960  in the paper \cite{Ma1960} for $p=1$. In this case the inequality reads
   \begin{equation}\label{1960} 
\|u\|_{L^{\frac n{n-1}}(\Omega)} \leq  \frac{\Gamma (1+\frac n2)^{\frac 1n}}{n \sqrt \pi}\Big(\|\nabla u\|_{L^1(\Omega)}  +
 \|u\|_{L^1(\partial \Omega)}\Big).
\end{equation}
 Inequality \eqref{1960} is in fact equivalent to the classical isoperimetric  inequality in $\rn$, as shown in \cite{Ma1960}. An application of this inequality to a suitable power   of $|u|$ easily implies a parallel  inequality for $1<p<n$ .
The optimal constants for these values of $p$ have much more recently been exhibited 
in \cite{MaggiVillani1}, via mass-transportation techniques. More generally, inequality  \eqref{mazyaineq} holds for any open set $\Omega$ with finite Lebesgue measure $\mathcal L^n(\Omega)$, provided that  $1\leq r \leq  \frac{p(n-1)}{n-p}$ and $q= \frac{rn}{n-1}$, the latter exponent being the largest possible if no additional assumption is imposed on $\Omega$ -- see \cite[Corollary 6.11.2 and subsequent Example]{Mabook}. Related results in the space of functions of bounded variation can be found in \cite{Rondi}.

In our paper \cite{CianchiMazya1} a theory of Friedrichs type inequalities, for any-order Sobolev spaces, in arbitrary domains is proposed, which allows for more general Borel measures in $\Omega$ on the left-hand sides and   more general norms of $u$ and of its derivatives. The idea pursued in that paper is that domain regularity can be replaced by appropriate information on boundary traces in Sobolev type inequalities. The punctum is that 
 the constants appearing in the inequalites in question are independent
of the geometry of $\Omega$. This is a  critical feature even when smooth domains are considered, and can be of use, for instance, in approximation arguments.

The present note centers on applications of results of \cite{CianchiMazya1} to first and second-order inequalities for specific classes of norms.  The dependence of the constants appearing in most of the relevant inequalities is also exhibited. Compact embeddings are proposed as well.

The first-order inequalities offered here have the form
\begin{equation}\label{firstineq}
\|u\|_{Y(\Omega, \mu)} \leq C_1\|\nabla u\|_{X(\Omega)} + C_2
 \|u\|_{Z(\partial \Omega)},
\end{equation}
where $\|\cdot\|_{X(\Omega)}$ and  $\|\cdot \|_{Z(\partial \Omega)}$ are Banach function norms on $\Omega$ and $\partial \Omega$ with respect to the Lebesgue measure and the $(n-1)$-dimensional Hausdorff measure, respectively, and  $\|\cdot\|_{Y(\Omega, \mu)}$ is a Banach function norm with respect to an $\alpha$-upper Ahlfors regular measure $\mu$, with $\alpha \in (n-1, n]$. The norms that will come into play in our discussion are of Lebesgue, Lorentz, Lorentz-Zygmund and Orlicz type.

The second-order inequalities to be presented disclose further new traits. A major novelty with respect to \eqref{friedineq2}, and to other customary inequalities, is that the boundary norms only depend on the trace of $u$ on $\partial \Omega$ and not on that of $\nabla u$. Indeed, our second-order inequalities for $u$ read
\begin{equation}\label{secondineq}
\|u\|_{Y(\Omega, \mu)} \leq C_1\|\nabla^2 u\|_{X(\Omega)} +  C_2 \|g_u\|_{U(\partial \Omega)}+ C_3
 \|u\|_{Z(\partial \Omega)},
\end{equation}
where $\|\cdot\|_{X(\Omega)}$, $\|\cdot \|_{Z(\partial \Omega)}$ and $\|\cdot\|_{Y(\Omega, \mu)}$ are as above,   $\|\cdot \|_{U(\partial \Omega)}$ is a Banach function norm on   $\partial \Omega$, and $g_u$ denotes any upper gradient, in the sense of Haj\l asz \cite{Hajlasz}, of the trace of $u$ over $\partial \Omega$. The function $g_u$ can be regarded as a surrogate for the tangential gradient of $u$ over $\partial \Omega$ when the latter is not smooth, and hence it is not endowed with a differential structure.
\\
Let us emphasize  that, although inequalities of the form \eqref{secondineq} are known to hold in regular domains even without the middle term on the right-hand side, such a term cannot be dispensed with in highly irregular domains, such as the one in the figure. 
We refer to the paper \cite{CianchiMazya1} (see Example 7.1) for an example with this regard, and for more examples demonstrating the sharpness of our results under various respects.

\begin{figure}
\begin{center}
\includegraphics[height=7cm]{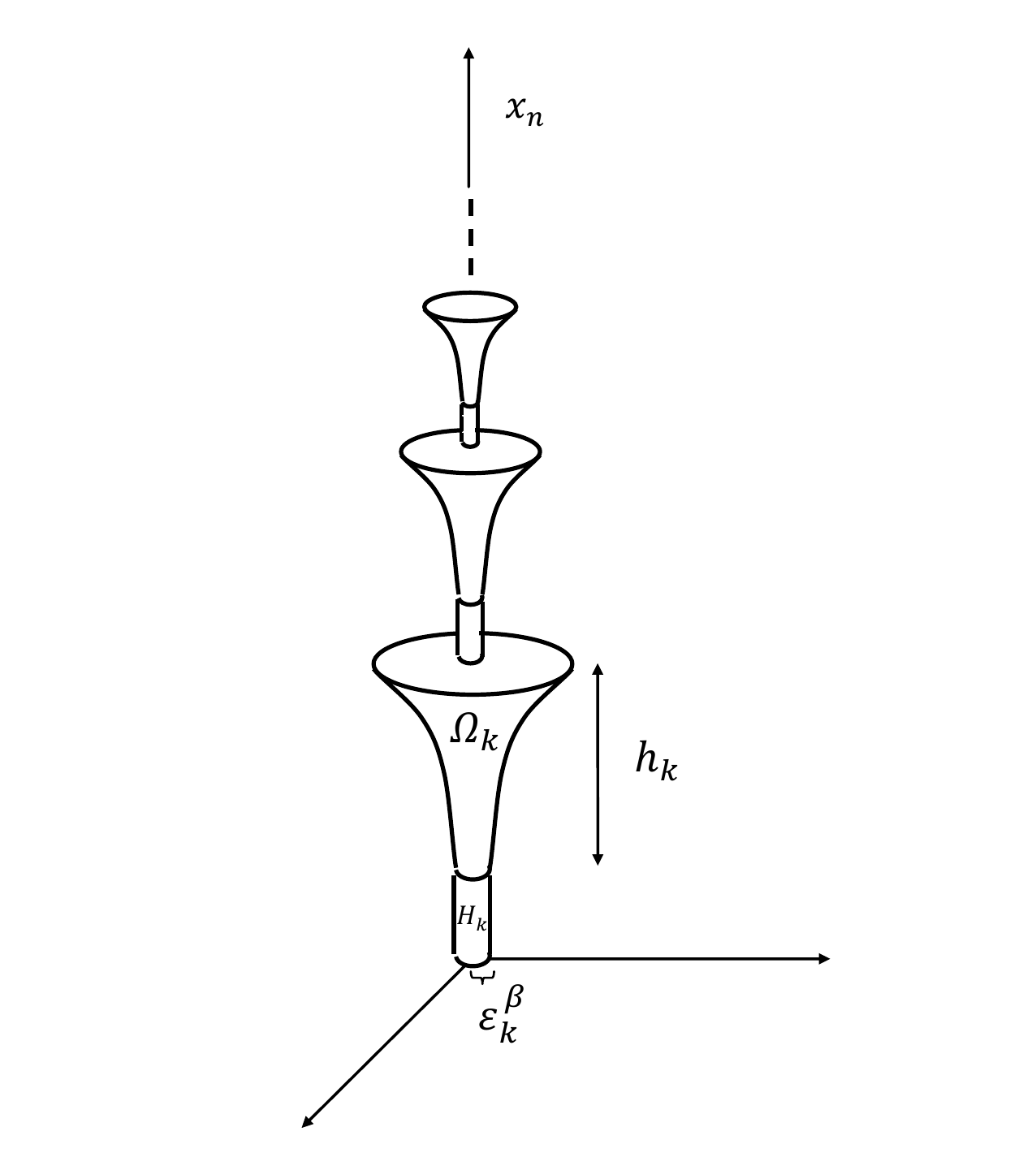}
\end{center}
        \label{Fig3}
\end{figure}

Bounds for first-order derivatives can also be provided in the form
\begin{equation}\label{secondineqgrad}
\|\nabla u\|_{Y(\Omega, \mu)} \leq C_1\|\nabla^2 u\|_{X(\Omega)} + C_2
 \|g_u\|_{U(\partial \Omega)}.
\end{equation}
This inequality will be shown to hold, in particular, when $\mu=\mathcal L^n$, $\Omega$ is such that $\mathcal L^n (\Omega)<\infty$, $\hh (\partial \Omega)<\infty$, and $X=Y=Z=L^2$.  Hence, it   augments inequality \eqref{friedineq2} in that it only involves the  boundary trace of $u$, through $g_u$, instead of the trace of $\nabla u$, on the right-hand side. 

Finally, we report on companion  inequalities from \cite{CianchiMazya2} for vector-valued functions ${\bf u} : \Omega \to \rn$, where the full matrix-valued gradient $\nabla {\bf u}$ is replaced by its sole symmetric part, denoted by $\mathcal E {\bf u}$ and called symmetric gradient of ${\bf u}$. Namely, 
$$\mathcal E {\bf u}= \tfrac 12\big(\nabla {\bf u} + (\nabla {\bf u})^T\big)\,,$$
where  $(\nabla{\bf u})^T$ stands for the transpose matrix of $\nabla {\bf u}$.
The symmetric gradient enters various
mathematical models for physical phenomena.
Instances in this connection are provided by the theory of non-Newtonian fluids, where the function ${\bf u}$ represents the velocity of a fluid  \cite{AsMa, DL, FuS,
MalNRR, MR, MaRa}, and the theories of plasticity and nonlinear elasticity, where ${\bf u}$ stands for the displacement of a body \cite{FuS, Kohn, Te}.  

The approach developed in \cite{CianchiMazya1} and \cite{CianchiMazya2} rests upon pointwise bounds for functions and their derivatives in terms of classical Riesz potentials of their highest-order derivatives, plus integral operators of a new kind. These bounds, and ensuing estimates in rearrangement form and reduction principles to one-dimensional inequalities, are reproduced here in the cases of use for our purposes.
To give an idea of how the reduction principles can be exploited, we also reproduce a sketch of the proofs of a couple of   results on second-order inequalities.

\section{Background and notations}\label{backgr}

Let $\mathcal R $ be a  measure space endowed with a
$\sigma$-finite, non-atomic measure $m$.   The decreasing rearrangement $\phi_m\sp*
: [0, \infty ) \to [0, \infty]$ of a $m$-measurable function $\phi
: \mathcal R \to \mathbb R$
 is defined as
$$
\phi_m\sp*(t)=\inf\{{\tau \geq 0}:\,m (\{|\phi|>\tau\})\leq t\}\quad
\textup{for}\ t\in [0,\infty).
$$
A
rearrangement-invariant space $X(\mathcal R, m)$ on 
$(\mathcal R, m)$ is a Banach
function space (in the sense of Luxemburg) equipped with a norm
$\|\cdot \|_{X(\mathcal R, m)}$ such that
\begin{equation}\label{ri1}
\|\phi\|_{X(\mathcal R, m)} = \|\psi\|_{X(\mathcal R, m)}\quad
\hbox{whenever} \quad \phi^*_m = \psi^*_m.
\end{equation}
Every rearrangement-invariant space $X(\mathcal R, m)$ admits a
representation space $\overline X(0, \infty)$, namely another
rearrangement-invariant space  on $(0, \infty)$, endowed with the Lebesgue measure, such that
\begin{equation}\label{ri2}
\|\phi\|_{X(\mathcal R, m)} = \|\phi^*_m\|_{\overline X(0,
\infty)}\quad \hbox{for every $\phi \in X(\mathcal R, m)$.}
\end{equation}

A basic example of rearrangement-invariant space is the Lebesgue space $L^p(\mathcal R, m)$, defined as usual for $p\in[1,\infty]$.

A generalization of the Lebesgue spaces is constituted by the  Lorentz spaces. Assume that either $1<p<\infty$
and $1\leq \sigma\leq\infty$, or $p=\sigma=1$, or $p=\sigma=\infty$. Then the functional given by
$$
\|\phi\|_{L\sp{p,\sigma}(\mathcal R, m)}=
\left\|t\sp{\frac{1}{p}-\frac{1}{\sigma}}\phi^*_m(t)\right\|_{L\sp \sigma(0,m (\mathcal R))}
$$
for  a $m$-measurable function $\phi$ on $\mathcal R$ is (equivalent) to a~rearrangement-invariant  norm. Here, and in what follows,  we use the convention that $\frac1{\infty}=0$. Observe that $L\sp{p,p}(\mathcal R, m)= L\sp{p}(\mathcal R, m)$ for $p \in [1,\infty]$. Moreover, $L\sp{p,\sigma_1}(\mathcal R, m)\subsetneq L\sp{p,\sigma_2}(\mathcal R, m)$ if $\sigma_1 < \sigma _2$.
\\
The Lorentz spaces admit, in their turn, a further extension, on finite measure spaces, provided by the Lorentz-Zygmund spaces. 
Assume  that $1\leq p,q\leq\infty$ and $\vartheta \in\mathbb R$, and that $m (\mathcal R)< \infty$. The Lorentz-Zygmund
functional is defined as 
\begin{equation}\label{E:1.18}
\|\phi \|_{L\sp{p,\sigma;\vartheta}(\mathcal R, m)}=
\left\|t\sp{\frac{1}{p}-\frac{1}{\sigma}}\log \sp
\vartheta \Big(1+\tfrac{m(\mathcal R)}{t}\Big) \phi^*_m(t)\right\|_{L\sp \sigma(0,m (\mathcal R))}
\end{equation}
for  a $m$-measurable function $\phi$ on $\mathcal R$. For suitable choices of the parameters  $p,\sigma, \theta$,  this functional
 is equivalent to rearrangement-invariant
norm.  A description of the admissible choices of these parameters  and of the main properties  of
Lorentz-Zygmund spaces can be found in 
\cite{EOP, glz}  and \cite[Chapter~9]{PKJF}.

The Orlicz spaces generalize the Lebesgue spaces in a different direction. Let $A: [0, \infty) \to
[0, \infty]$ be a Young function, namely  a left-continuous convex function which is
neither identically equal to $0$ nor to $\infty$. Then the Orlicz space 
$L^A(\mathcal R, m)$ associated with $A$ is the
rearrangement-invariant space equipped the Luxemburg norm given by
\begin{equation}\label{Orlicz}
\| \phi \|_{_{L^{A}(\mathcal R, m)}} =  \ \inf \Bigg\{ \lambda > 0 \ :
\ \int_{\mathcal R} A\bigg(\frac{|\phi |}{\lambda} \bigg)\, d\nu \
\leq \ 1 \Bigg\}.
\end{equation}

The Lebesgue space  $L^{p}(\mathcal R, m)$ is recovered with the choice   $A(t)=t^p$ if $p \in [1,
\infty)$, and  
$A(t)=\infty \chi_{_{(1, \infty)}}(t)$  if $p=\infty$, where $\chi_E$ denotes the characteristic function of a set $E$. If f $m(\mathcal R)< \infty$ and   $\gamma
>0$, we denote by $\exp L^\gamma (\mathcal R, m)$ the Orlicz space
built upon  a Young function $A(t)$ equivalent to $e^{t^\gamma } -1$ near infinity.
 Notice that  the space  $\exp L^\gamma (\mathcal R, m)$   belongs the family of Lorentz-Zygmund spaces as well. Indeed,  
\begin{equation}\label{equivexp}
\exp L^\gamma (\mathcal R, m) = L^{\infty, \infty;-\frac 1\gamma}(\mathcal R, m),
\end{equation}
up to equivalent norms, depending on $\gamma$ and on $m(\mathcal R)$.

Throughout this paper, $\mathcal R$ will be either an open subset $\Omega$ of $\rn$, or its topological boundary $\partial \Omega$. The Lebesgue measure in $\rn$ is denoted by $\mathcal L^n$. The notation $\hh$ is employed for the $(n-1)$-dimensional Hausdorff measure restricted to $\partial \Omega$. The  measure will be omitted in the notation of rearrangement-invariant spaces on $\Omega$ or on $\partial \Omega$ equipped with these standard measures. 

Upper Ahlfors regular measures, also called Frostman measures, on $\Omega$ will also considered. 
Recall that  an $\alpha$-upper Ahlfors regular measure 
$\mu$ on $\Omega$,  with  $\alpha \in (0, n]$,  is a Borel measure  
such that
\begin{equation}\label{measure}
\mu (B_r(x) \cap \Omega ) \leq C_\mu r^\alpha\quad \hbox{for $x \in
\Omega$ and  $r>0$,}
\end{equation}
for some   constant $C_\mu>0$. Here,
$B_r(x)$ denotes the ball, centered at $x$, with radius $r$.

The spaces of continuous functions in $\Omega$ and in $\overline \Omega$  are denoted by $C(\Omega)$ and $C(\overline \Omega)$, respectively. The notation $C_b(\overline \Omega)$ is adopted for the space of functions in $C(\overline \Omega)$ with bounded support. The restriction to $\partial \Omega$ of a function $u \in C(\overline \Omega)$ is denoted by $u_{\partial \Omega}$. We shall simply write $u$ instead of  $u_{\partial \Omega}$ in the notation of norms of  $u_{\partial \Omega}$ over $\partial \Omega$.

For each $x \in
 \Omega$,
 we set
\begin{equation}\label{partomegax}
(\partial \Omega )_x = \{y \in \partial \Omega: (1-t)x + ty \subset
\Omega\,\,\, \hbox{for every $t \in (0,1)$}\}.
\end{equation}
This is the largest subset  $\partial \Omega $ which can be \lq\lq seen" from $x$.
One can show that   $(\partial
\Omega )_x$ is a Borel
set.
Next, we define the set
\begin{equation}\label{finite}
(\Omega \times \mathbb S ^{n-1})_0 = \{(x, \vartheta ) \in \Omega
\times \mathbb S ^{n-1}: x+t\vartheta \in \partial \Omega\,\,
\hbox{for some $t>0$}\}.
\end{equation}
Clearly, $(\Omega \times \mathbb S ^{n-1})_0= \Omega \times \mathbb S ^{n-1}$ if $\Omega $ is bounded.
 Let $\zeta  : (\Omega \times \mathbb S ^{n-1})_0
\to \rn$
 be the function defined as
$$\zeta (x, \vartheta) = x + t \vartheta, \quad \hbox{where $t$ is
such that $x + t \vartheta \in (\partial \Omega) _x$}.$$
  In other
words, $\zeta (x, \vartheta)$ is the first point of intersection of
the half-line $\{ x + t \vartheta: t >0\}$ with $\partial \Omega$.
\par
 Given a function $\varphi : \partial \Omega \to \mathbb R$, with bounded
 support, we adopt the convention that  $g(\zeta (x, \vartheta))$ is defined for every
 $(x, \vartheta) \in \Omega \times \mathbb S ^{n-1}$, on extending it by $0$ on  $(\Omega \times \mathbb S ^{n-1}) \setminus (\Omega \times \mathbb S ^{n-1})_0$; namely, we set
\begin{equation}\label{conv}
\varphi (\zeta (x, \vartheta))= 0 \quad  \hbox{if $(x, \vartheta) \in
(\Omega \times \mathbb S ^{n-1}) \setminus (\Omega \times \mathbb S ^{n-1})_0$.}
\end{equation}

\section{First-order   inequalities}

In this section we are concerned with inequalities for first-order Sobolev spaces.

Given  a rearrangement-invariant space $X(\Omega)$ on an open set $\Omega \subset \rn$, we denote by $V^{1}X(\Omega)$
 the first-order homogeneous Sobolev type space defined as
\begin{equation}\label{sobolevV}
V^{1}X (\Omega ) =   \big\{u: \hbox{$u$ is   weakly
differentiable in $\Omega$, and $|\nabla u| \in
X(\Omega)$}\big\}.
\end{equation}
 Notice that no assumption on the integrability of $u$ is made  in the definition of $V^{1}X (\Omega )$.

In  
 the following statements, norms of Sobolev functions and of their derivatives with respect to an $\alpha$-upper Ahlfors regular measures $\mu$ appear. The relevant functions have to interpreted in the sense of traces with respect to $\mu$. These traces are well defined, thanks to standard (local) Sobolev
inequalities with measures, owing to the assumption that $\alpha \in
(n-1, n]$ in \eqref{measure}. An analogous convention applies to the
integral operators that enter our discussion.

\subsection{First-order Friedrichs type inequalties}

We begin with inequalities involving classical Sobolev and boundary trace spaces built upon Lebesgue norms. The target spaces are also of Lebesgue type,   except for a borderline case, where Orlicz spaces of exponential type naturally come  into play.

\begin{thm}\label{fried_p<n} {\bf [First-order   inequalities]}
Let $\Omega$ be any open  set in $\rn$, $n \geq 2$ with $\mathcal L^n(\Omega)<\infty$ and $\hh (\Omega)< \infty$.  Assume that
$\mu$ is a   Borel measure in $\Omega$ fulfilling \eqref{measure} for
some $\alpha \in (n-1, n]$ and for some $C_\mu
>0$, and such that $ \mu (\Omega)< \infty$. 
\\ (i) 
Let $1<p<n$,  $r>1$, and 
\begin{equation} \label{q} q= \min \bigg\{\frac {p\alpha }{n-p}, \frac {r\alpha }{n-1}\bigg\}.
\end{equation}
Then there exists a constant 
$C=C(n, p, r, \alpha, C_\mu)$ such that
\begin{align}\label{fried1}
\|u\|_{L^q(\Omega, \mu)}& \leq C \Big(\max\{\mu(\Omega)^{\frac n\alpha}, \mathcal L^n(\Omega)\}   ^{\frac \alpha{qn}-\frac{n-p}{pn}} \|\nabla u\|_{L^p(\Omega)}  \nonumber
\\  & \qquad + \max\{\mu(\Omega)^{\frac {n-1}\alpha}, \hh(\partial \Omega)\}  ^{\frac \alpha{q(n-1)}-\frac{1}{r}} \|u\|_{L^r(\partial \Omega)}\Big)
\end{align}
for every $u \in V^{1}L^p(\Omega ) \cap C_b(\overline \Omega )$. 
\\ (ii)
Let $\beta>0$,  and 
\begin{equation}\label{gamma}
\gamma= \min\{n', \beta\}.
\end{equation}
Then there exists a constant 
$C=C(n,  	\beta, \alpha, C_\mu, \mathcal L^n (\Omega), \hh (\Omega), \mu(\Omega))$ such that
\begin{align}\label{fried3}
\|u\|_{\exp L^\gamma (\Omega, \mu)} \leq C \Big( \|\nabla u\|_{L^n(\Omega)} +  \|u\|_{\exp L^\beta(\partial \Omega)}\Big)
\end{align}
for every $u \in V^{1}L^n(\Omega ) \cap C_b(\overline \Omega )$. 
\\ (iii)
 Let $p
>n$. Then
 there
 exists a  constant $C=C(n, p)$
  such that
\begin{align}\label{infdisp}
\|u\|_{L^\infty (\Omega )} & \leq C \Big(\mathcal L^n(\Omega)^{\frac 1n-\frac 1p}\|\nabla u\|_{L^{p}(\Omega )}
 +     \|u\|_{L^\infty(\partial \Omega)}\Big)
\end{align}
for every $u \in V^{1}L^p(\Omega) \cap C_b(\overline \Omega)$.
\end{thm}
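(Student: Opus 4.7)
All three inequalities should follow from a single pointwise bound on $|u(x)|$ by a Riesz potential of $|\nabla u|$ and a spherical average of the trace of $u$ on $\partial\Omega$. Fix $x\in\Omega$ and $u\in V^1L^p(\Omega)\cap C_b(\overline\Omega)$. For each $\vartheta\in\mathbb S^{n-1}$ with $\zeta(x,\vartheta)\in\partial\Omega$, the fundamental theorem of calculus along the segment from $x$ to $\zeta(x,\vartheta)$ gives
\[
u(x)=u(\zeta(x,\vartheta))-\int_0^{|\zeta(x,\vartheta)-x|}\nabla u(x+s\vartheta)\cdot\vartheta\,ds.
\]
Averaging over $\mathbb S^{n-1}$ (using convention \eqref{conv}) and passing from polar to Cartesian coordinates $y=x+s\vartheta$ in the gradient integral yields the pointwise bound
\[
|u(x)|\leq C_n\int_\Omega\frac{|\nabla u(y)|}{|x-y|^{n-1}}\,dy+C_n\int_{\mathbb S^{n-1}}|u(\zeta(x,\vartheta))|\,d\vartheta=:C_n\bigl(I_1(|\nabla u|)(x)+(Tu)(x)\bigr).
\]
Thus parts (i)--(iii) reduce to bounding $I_1(|\nabla u|)$ and $Tu$ in the target norm on $(\Omega,\mu)$.

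\textbf{Mapping properties of $I_1$ and $T$.} The Adams theorem for Riesz potentials against upper Ahlfors regular measures gives $\|I_1f\|_{L^{p\alpha/(n-p)}(\Omega,\mu)}\le C(n,p,\alpha,C_\mu)\|f\|_{L^p(\Omega)}$ for $1<p<n$, the exponential endpoint $I_1:L^n(\Omega)\to\exp L^{n'}(\Omega,\mu)$ when $p=n$, and $\|I_1f\|_{L^\infty(\Omega)}\leq C(n,p)\,\L(\Omega)^{1/n-1/p}\|f\|_{L^p(\Omega)}$ for $p>n$ (the last by direct H\"older). For $T$, the spherical-to-surface change of variables $d\vartheta\le d\hh(y)/|x-y|^{n-1}$ represents $Tu(x)$ as a two-measure Riesz potential from $(\partial\Omega,\hh)$ into $(\Omega,\mu)$; the corresponding Adams-type bounds from \cite{CianchiMazya1} yield $\|Tu\|_{L^{r\alpha/(n-1)}(\Omega,\mu)}\leq C(n,r,\alpha,C_\mu)\|u\|_{L^r(\partial\Omega)}$ in the polynomial range and $T:\exp L^\beta(\partial\Omega)\to\exp L^\beta(\Omega,\mu)$ in the exponential range, while $\|Tu\|_{L^\infty(\Omega)}\leq|\mathbb S^{n-1}|\,\|u\|_{L^\infty(\partial\Omega)}$ is immediate from the definition of $T$ and convention \eqref{conv}.

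\textbf{Matching the exponents.} For (i), H\"older's inequality on $(\Omega,\mu)$ embeds each endpoint space into the common $L^q(\Omega,\mu)$ prescribed by \eqref{q}; the maxima in \eqref{fried1} arise from balancing the two equivalent routes of estimation, through the $\mu$-Adams exponent or via the classical Sobolev/trace exponent followed by an $L^q(\L)\to L^q(\mu)$ (respectively $L^q(\hh)\to L^q(\mu)$) comparison, the worst of the two always being operative. For (ii) one embeds both $\exp L^{n'}(\Omega,\mu)$ and $\exp L^\beta(\Omega,\mu)$ into the coarser $\exp L^\gamma(\Omega,\mu)$ with $\gamma=\min\{n',\beta\}$, which is allowed since $\mu(\Omega)<\infty$. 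Part (iii) follows at once from the two $L^\infty$ bounds above.

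\textbf{Main obstacle.} The real difficulty is to verify that all constants depend only on $(n,p,r,\alpha,C_\mu)$ and the listed measure quantities, with no further geometric input on $\Omega$ — this is the distinctive contribution of \cite{CianchiMazya1}, whereas standard presentations of the Sobolev--Adams theory typically permit constants that depend on the regularity of $\partial\Omega$. In (ii) the log-normalisation factor $\log(1+\mu(\Omega)/t)$ built into \eqref{E:1.18} unavoidably forces the final constant to depend on $\mu(\Omega)$ and $\hh(\partial\Omega)$, which is why these volumes appear in the list of parameters on which $C$ is allowed to depend.
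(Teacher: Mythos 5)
Your proposal shares the paper's point of departure (the pointwise bound in Theorem~\ref{point1}), but then diverges: the paper proceeds via the rearrangement estimate of Theorem~\ref{rearrest_1} to the reduction principle of Theorem~\ref{reduction}, which converts \eqref{fried1}--\eqref{infdisp} into one-dimensional weighted Hardy inequalities (exactly as in the sketch given for Theorem~\ref{fried2_p<n/2}), verified by the classical Hardy-inequality criteria in \cite[Section 1.3.2]{Mabook}. You instead invoke Adams-type mapping properties of $I_1$ and of the operator $T$ directly. This is a legitimate alternative for the fixed Lebesgue and exponential targets of this theorem, and the two routes are in fact equivalent at bottom: the Adams bounds for $I_1$ are themselves proved via $O$'Neil/rearrangement estimates, and the corresponding bound for $T$ \emph{is} precisely the content of the third term in \eqref{rearr1} — there is no classical off-the-shelf theorem for $T$, since it is not a genuine Riesz potential and the pullback $d\hh(\vartheta)\le d\hh(y)/|x-y|^{n-1}$ under the ray map is nontrivial for irregular $\partial\Omega$ (it is part of the proof of Theorem~\ref{point1}--\ref{rearrest_1} in \cite{CianchiMazya1}, not a standard change of variables). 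So your appeal to ``Adams-type bounds from \cite{CianchiMazya1}'' for $T$ is really a re-citation of the rearrangement estimate in disguise, which you candidly acknowledge in your last paragraph. What the paper's route buys is uniformity: having the single reduction principle (Theorem~\ref{reduction}) in hand, the same machinery yields Theorem~\ref{Lorentz} for Lorentz and Lorentz--Zygmund norms with no extra interpolation work, whereas your operator-by-operator approach would have to be redone endpoint by endpoint. The exponent-matching and the provenance of the $\max\{\cdot,\cdot\}$ factors in \eqref{fried1} is correct but would read more cleanly if phrased as in the paper: after the change of variables in \eqref{red1bis}--\eqref{red5}, the Hardy inequalities are posed on $(0,\ell_1)$ and $(0,\ell_2)$ with $\ell_1=\max\{\mu(\Omega)^{n/\alpha},\mathcal L^n(\Omega)\}$ and $\ell_2=\max\{\mu(\Omega)^{(n-1)/\alpha},\hh(\partial\Omega)\}$, and the admissible constants carry exactly the powers of $\ell_1,\ell_2$ that appear in \eqref{fried1}.
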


 \begin{rem}
Clearly, the inequalities of Theorem  \ref{fried_p<n}  continue to hold for functions in the closure of the space $V^{1}L^p(\Omega) \cap C_b(\overline \Omega)$ with respect to the norms appearing on their right-hand sides. An analogous remark holds for all our inequalities.
 \end{rem}

  \begin{rem}
If $q=\frac {p\alpha }{n-p}$, then the exponent of the coefficient multiplying the gradient norm in inequality \eqref{fried1} vanishes, and, in fact, the assumption $\mathcal L^n(\Omega)< \infty$ can be dropped. On the other hand, if
$q=\frac {r\alpha }{n-1}$, then  the exponent of the coefficient  multiplying the boundary norm in inequality \eqref{fried1} vanishes, and the assumption $\hh (\partial \Omega) <\infty$ can be removed.  Analogous weakenings of the assumptions on $\Omega$ are admissible in the inequalities stated below, whenever the constants involved turn out to be independent of   $\mathcal L^n(\Omega)$ or $ \hh (\partial \Omega)$.
\end{rem}

Part (i) of Theorem \ref{fried_p<n} 
 extends a version  of the  Sobolev inequality for
measures, on regular domains \cite[Theorem 1.4.5]{Mabook}. It also
augments the results mentioned in Section \ref{S:introd} on inequality \eqref{mazyaineq} about general domains, 
which are  confined to norms on the left-hand side  with respect to the
Lebesgue measure. 
 Part (ii)    generalizes  the
 Yudovich-Pohozaev-Trudinger  inequality to possibly irregular
 domains. Moreover, it     enhances, under some respect, a result of
 \cite{MaggiVillani2}, where optimal constants are exhibited, but  for a weaker exponential norm,  and just  with the Lebesgue measure, on the left-hand side.

\smallskip
The following theorem provides us with a compactness result for subcritical norms on the left-hand side.

\begin{thm}\label{compactp<n} {\bf [Compact   embeddings]}
Let $\Omega$ be any open  set in $\rn$, $n \geq 2$ with $\mathcal L^n(\Omega)<\infty$ and $\hh (\Omega)< \infty$.  Let
$\mu$ be a   Borel measure in $\Omega$ fulfilling \eqref{measure} for
some $\alpha \in (n-1, n]$ and for some $C_\mu
>0$, and such that $ \mu (\Omega)< \infty$.  
\\ (i) Let $1<p<n$ and $r>1$.
Assume that
\begin{equation}\label{compnew1}
1 \leq q  < \min \bigg\{\frac {p\alpha }{n-p}, \frac {r\alpha }{n-1}\bigg\}.
\end{equation}
If
 $\{u_k\}$
is a bounded sequence in the space $V^{1}L^p(\Omega ) \cap
 C_{\rm b}(\overline \Omega )$ endowed with the norm appearing
on the right-hand side of inequality \eqref{fried1},  then $\{
u_k\}$ is a Cauchy sequence in $L^q(\Omega, \mu)$.
\\ (ii)  Assume that
\begin{equation}\label{compnew2}
0<\gamma <\min\{n', \beta\}
\end{equation}
If
 $\{u_k\}$
is a bounded sequence in the space $V^{1,n}(\Omega ) \cap
 C_{\rm b}(\overline \Omega )$ endowed with the norm appearing
on the right-hand side of inequality \eqref{fried3},  then $\{
u_k\}$ is a Cauchy sequence in $\exp L^{\theta}(\Omega, \mu)$.
\end{thm}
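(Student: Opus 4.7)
The plan for both parts is to combine the a priori bound in the critical target space provided by Theorem~\ref{fried_p<n} with an exhaustion argument: on a sequence of bounded smooth subdomains $\Omega_j\subset\subset\Omega$ with $\mu(\Omega\setminus\Omega_j)\to 0$ (which exists by inner regularity of the finite Borel measure $\mu$), I will produce compactness locally by means of a Rellich--Kondrachov-type theorem for Frostman measures, and then control the tail by H\"older interpolation against the critical norm.

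\textbf{Part (i).} First I would invoke Theorem~\ref{fried_p<n}(i) with $q^*:=\min\{p\alpha/(n-p),\,r\alpha/(n-1)\}$ to obtain a uniform bound $\|u_k\|_{L^{q^*}(\Omega,\mu)}\le M$. On each $\Omega_j$ the gradient bound is free, and the $L^{q^*}(\Omega_j,\mu)$-bound together with H\"older gives a uniform control on local $L^1$-averages of $u_k$, so that a Poincar\'e inequality on the smooth domain $\Omega_j$ furnishes a uniform $W^{1,p}(\Omega_j)$ bound. A Rellich--Kondrachov theorem for smooth bounded domains with target $L^q(\Omega_j,\mu)$ in the subcritical range $q<q^*$ (available for $\alpha$-upper Ahlfors regular measures with $\alpha>n-1$, via the rearrangement machinery of \cite{Mabook}) then yields a subsequence Cauchy in $L^q(\Omega_j,\mu)$, and a diagonal extraction produces $\{u_{k_l}\}$ Cauchy in $L^q(\Omega_j,\mu)$ for every $j$. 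The tail is disposed of by H\"older:
\begin{equation*}
\int_{\Omega\setminus\Omega_j}|u_{k_l}-u_{k_m}|^q\,d\mu \le (2M)^q\,\mu(\Omega\setminus\Omega_j)^{1-q/q^*},
\end{equation*}
which is arbitrarily small for large $j$.

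\textbf{Part (ii).} The same scheme applies. Theorem~\ref{fried_p<n}(ii) supplies $\|u_k\|_{\exp L^{\gamma_{\max}}(\Omega,\mu)}\le M$ with $\gamma_{\max}:=\min\{n',\beta\}$, and in particular $\|u_k\|_{L^p(\Omega,\mu)}\le CM\,p^{1/\gamma_{\max}}$ for every $p\ge 1$. A diagonal extraction based on local compactness $W^{1,n}(\Omega_j)\hookrightarrow\hookrightarrow L^p(\Omega_j,\mu)$ for every finite $p$, together with the $L^2(\mu)$-tail estimate $\int_{\Omega\setminus\Omega_j}|u_{k_l}-u_{k_m}|\,d\mu\lesssim \mu(\Omega\setminus\Omega_j)^{1/2}$, produces a subsequence $\{u_{k_l}\}$ Cauchy in $L^p(\Omega,\mu)$ for every $p<\infty$. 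To upgrade this to Cauchy in $\exp L^\theta(\Omega,\mu)$ for $\theta<\gamma_{\max}$, I would exploit the equivalent characterization
\begin{equation*}
\|f\|_{\exp L^\theta(\Omega,\mu)}\asymp \sup_{p\ge 1}p^{-1/\theta}\|f\|_{L^p(\Omega,\mu)},
\end{equation*}
which follows from the Taylor expansion of $e^{t^\theta}$ and is consistent with \eqref{equivexp}. Splitting the supremum at a large threshold $P$, the high-$p$ tail is bounded by $2CM\,P^{1/\gamma_{\max}-1/\theta}\to 0$ as $P\to\infty$ (the exponent is negative), whereas the range $p\in[1,P]$ is handled by H\"older interpolation between the $L^1(\mu)$-convergence and the $L^{2P}(\mu)$-bound.

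\textbf{Main obstacle.} The principal technical point is the local Rellich--Kondrachov-type compactness with respect to the Frostman measure $\mu$, namely $W^{1,p}(\Omega_j)\hookrightarrow\hookrightarrow L^q(\Omega_j,\mu)$ for $q<q^*$ in Part (i) and $W^{1,n}(\Omega_j)\hookrightarrow\hookrightarrow L^p(\Omega_j,\mu)$ for every finite $p$ in Part (ii). These extensions of the classical Rellich--Kondrachov theorem lie beyond the standard statement because the target measure is not Lebesgue, but they can be extracted from the rearrangement-form Sobolev estimates and the one-dimensional Hardy-type reductions worked out in \cite{Mabook} and \cite{CianchiMazya1}. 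Once this local compactness is secured, the diagonal extraction and the interpolation-based tail control are routine.
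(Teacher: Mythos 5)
Your overall strategy (interior approximation, local Rellich-type compactness with respect to the Frostman measure, H\"older interpolation against the critical norm to kill the tail) is exactly the paper's: the paper fixes $\varepsilon>0$, takes a compact $K\subset\Omega$ with $\mu(\Omega\setminus K)<\varepsilon$, a cut-off $\xi$ and a smooth $\Omega'$ with $\operatorname{supp}\xi\subset\Omega'\subset\Omega$, invokes a weighted Rellich theorem \cite[Theorem~1.4.6/1]{Mabook} on $\Omega'$ (resp.\ the Orlicz-target compactness theorem of Cavaliere--Mihula for Part~(ii)), and controls $\|(1-\xi)(u_k-u_j)\|$ by H\"older against the critical $L^{q^*}(\Omega,\mu)$ (resp.\ $\exp L^{\gamma_{\max}}$) norm. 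Your diagonal extraction over an exhaustion and the paper's single $\Omega'$ per $\varepsilon$ are routine variants, and your identification of the weighted local compactness as the key external input is correct — it is precisely the role played by \cite[Theorem~1.4.6/1]{Mabook} and \cite{CavMih}.

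There is, however, one genuine gap in the step that produces the uniform $W^{1,p}(\Omega_j)$ bound (and its analogue $W^{1,n}(\Omega_j)$ in Part~(ii)). You argue that ``the $L^{q^*}(\Omega_j,\mu)$-bound together with H\"older gives a uniform control on local $L^1$-averages of $u_k$, so that a Poincar\'e inequality on $\Omega_j$ furnishes a uniform $W^{1,p}(\Omega_j)$ bound.'' This does not work for a general $\alpha$-upper Ahlfors measure with $\alpha\in(n-1,n]$: the bound $\|u_k\|_{L^{q^*}(\Omega,\mu)}\le M$ only controls integrals against $\mu$, and $\mu$ may be mutually singular with, or vanish on a large portion of, $\Omega_j$ (e.g.\ $\mu$ concentrated near a hypersurface when $\alpha$ is close to $n-1$). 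Hence it gives no information on Lebesgue $L^1$-averages of $u_k$, and the Poincar\'e step is vacuous. The correct device, and the one the paper uses, is to apply Theorem~\ref{fried_p<n}(i) \emph{a second time with $\mu=\mathcal L^n$} (which is admissible since $\mathcal L^n$ is $n$-upper Ahlfors regular): this yields a uniform bound on $\|u_k\|_{L^{q}(\Omega,\mathcal L^n)}$ for some $q>p$, whence a uniform $L^p(\Omega')$ bound by H\"older on the bounded set $\Omega'$, and together with the given gradient bound this gives the needed $W^{1,p}(\Omega')$ control. The same remark applies in Part~(ii), where you again try to extract Lebesgue control from the $\mu$-measure exponential bound before invoking $W^{1,n}(\Omega_j)\hookrightarrow\hookrightarrow L^p(\Omega_j,\mu)$; replace it by an application of Theorem~\ref{fried_p<n}(ii) with $\mu=\mathcal L^n$. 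Once this is corrected, your interpolation upgrade from Cauchy-in-$L^p(\mu)$-for-all-finite-$p$ to Cauchy-in-$\exp L^\theta(\mu)$ is a sound alternative to the paper's direct Orlicz--H\"older tail estimate.
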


The next  result concerns inequalities for functions whose  gradient belongs to a Lorentz space $L^{p,\sigma}(\Omega)$. It extends  and improves Theorem \ref{fried_p<n}, since $L^{p,p}(\Omega) = L^p(\Omega)$.  In fact, it  augments the conclusions of   Parts (i) and (ii) of Theorem \ref{fried_p<n} also in this special case when $\sigma =p$, inasmuch as it provides us with strictly smaller  target spaces in the respective inequalities.

\begin{thm}\label{Lorentz} {\bf [First-order Lorentz--Sobolev inequalities]}Let $\Omega$ be any open  set in $\rn$, $n \geq 2$ with $\mathcal L^n(\Omega)<\infty$ and $\hh (\Omega)< \infty$.  Let
$\mu$ be a   Borel measure in $\Omega$ fulfilling \eqref{measure} for
some $\alpha \in (n-1, n]$ and for some $C_\mu
>0$, and such that $ \mu (\Omega)< \infty$.  
\\ (i) Assume  that $1 < p <
n$, $r>1$, and $1\leq \sigma, \varrho \leq \infty$. Let  $q$ be defined as in \eqref{q} and let
$$\eta \geq 
\begin{cases}
\sigma & \quad \text{if $q =\frac {p\alpha }{n-p}$}
\\
\varrho  & \quad \text{if $q=\frac {r\alpha }{n-1}$.}
\end{cases}
$$
Then
 there
 exists a constant $C=C(n, p, r,  \sigma, \varrho,  \alpha,  C_\mu)$ such that
\begin{align}\label{mainlor}
\|u\|_{L^{q,\eta}(\Omega, \mu)} & \leq
C \Big(\max\{\mu(\Omega)^{\frac n\alpha}, \mathcal L^n(\Omega)\}   ^{\frac \alpha{qn}-\frac{n-p}{pn}}\|\nabla  u\|_{L^{p,\sigma}(\Omega )} \nonumber
\\  & \qquad  +   \max\{\mu(\Omega)^{\frac {n-1}\alpha}, \hh(\partial \Omega)\}  ^{\frac \alpha{q(n-1)}-\frac{1}{r}}
\|u\|_{L^{r, \varrho}(\partial \Omega )}\Big)
\end{align}
for every $u \in V^{1}L^{p,q}(\Omega) \cap C_b(\overline \Omega)$.
\\ (ii) Assume that   $\sigma , \varrho >1$,  $\varsigma < - \tfrac 1\varrho$.    Let 
$$\eta \geq \max \{\sigma,   \varrho\},$$
and 
$$\vartheta \leq \min\Big\{-1+\tfrac 1 \sigma -\tfrac 1 \eta, \varsigma + \tfrac 1 \varrho -\tfrac 1\eta\Big\}.$$
 Then there exists a constant $C=C(n, \eta, \sigma, \varrho, \theta,  \varsigma, \alpha, C_\mu, \mathcal L^n (\Omega), \mu(\Omega), \hh (\partial \Omega))$ such that
 \begin{align}\label{BWlor}
\|u\|_{L^{\infty, \eta; \theta}(\Omega, \mu)}  \leq
C\Big( \|\nabla u\|_{L^{n,\sigma}(\Omega )}
  +
 \|u\|_{L^{\infty, \varrho; \varsigma}(\Omega)}\Big)
\end{align}
for every $u \in V^{1}L^{n,\sigma}(\Omega) \cap C_b(\overline \Omega )$.
\\ (iii) Assume that either $p=n$ and $\sigma =1$, or $p>n$ and $1\leq \sigma\leq \infty$.  
Then there
 exists a constant $C=C(n, p, \sigma, \alpha, C_\mu)$ such that
\begin{align}\label{inflor}
\|u\|_{L^\infty (\Omega )} & \leq C \Big(\mathcal L^n(\Omega)^{\frac 1n-\frac 1p}\|\nabla u\|_{L^{p,\sigma}(\Omega )}
 +     \|u\|_{L^\infty(\partial \Omega)}\Big)
\end{align}
for every $u \in V^{1}L^{p,\sigma}(\Omega) \cap C_b(\overline \Omega)$.
\end{thm}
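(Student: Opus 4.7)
The plan is to invoke the rearrangement and reduction machinery developed in \cite{CianchiMazya1}, whose effect is to bound $u^*_\mu(t)$ pointwise in $t$ by the sum of two one-dimensional integral operators: one acting on the rearrangement $|\nabla u|^*$ with respect to $\L$, the other on the rearrangement $(u_{\partial \Omega})^*$ with respect to $\hh$. Schematically, the bound reads
\begin{equation*}
u^*_\mu(t) \leq C \int_{c_1 t^{n/\alpha}}^{\L(\Omega)} s^{\frac 1n - 1} |\nabla u|^*(s)\,ds + C\int_{c_2 t^{(n-1)/\alpha}}^{\hh(\partial\Omega)} s^{\frac 1{n-1}-1} (u_{\partial\Omega})^*(s)\,ds,
\end{equation*}
with $C,c_1,c_2$ depending only on $n,\alpha,C_\mu$. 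The substitutions $t \mapsto c_1 t^{n/\alpha}$ and $t \mapsto c_2 t^{(n-1)/\alpha}$ encode the Ahlfors regularity of $\mu$ and cast both contributions into a common framework of weighted Riesz-type potentials on the half-line.

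Once this reduction is in hand, each part is proved by applying the representation-space norm $\|\cdot\|_{\overline Y(0, \mu(\Omega))}$ to the displayed bound and invoking an appropriate one-dimensional Hardy-type inequality. For Part (i), $\overline Y = L^{q,\eta}$, and the sharp Hardy inequality in Lorentz spaces gives that the gradient operator maps $L^{p,\sigma}(0,\L(\Omega))$ into $L^{q,\eta}(0, \mu(\Omega))$ precisely when $q = p\alpha/(n-p)$ and $\eta \geq \sigma$, while the boundary operator has the analogous mapping between $L^{r,\varrho}$ and $L^{q,\eta}$ with $q = r\alpha/(n-1)$ and $\eta \geq \varrho$. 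Choosing the smaller critical $q$ as in \eqref{q}, and tracking how the scaling factors propagate through the changes of variable, produces the maxima $\max\{\mu(\Omega)^{n/\alpha}, \L(\Omega)\}$ and $\max\{\mu(\Omega)^{(n-1)/\alpha}, \hh(\partial\Omega)\}$ in \eqref{mainlor}. For Part (iii), one estimates the two potential operators by H\"older's inequality in Lorentz spaces: the assumption $p>n$ (or $p=n$, $\sigma=1$) makes $s^{\frac 1n-1}\chi_{(0,\L(\Omega))}$ belong to $L^{p',\sigma'}$, so the gradient integral is controlled by $\L(\Omega)^{\frac 1n - \frac 1p}\|\nabla u\|_{L^{p,\sigma}}$, while the boundary operator is dominated by $\|u\|_{L^\infty(\partial\Omega)}$ times the total mass of the kernel.

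Part (ii) follows from the same two-operator reduction, except that the borderline exponent $p = n$ makes the gradient integral only logarithmically integrable, forcing the passage from Lorentz to Lorentz-Zygmund spaces (via the identity \eqref{equivexp} relating them to exponential classes). The relevant tool is now a Hardy-type inequality between Lorentz-Zygmund spaces, whose sharp form imposes exactly $\eta \geq \max\{\sigma, \varrho\}$ together with $\vartheta \leq \min\{-1 + \tfrac 1\sigma - \tfrac 1\eta,\ \varsigma + \tfrac 1\varrho - \tfrac 1\eta\}$; these constraints come from monitoring the logarithmic factors in the one-dimensional Hardy inequality, as in \cite{EOP, glz, PKJF}. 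The main obstacle I anticipate is this Part (ii): the delicate bookkeeping of logarithmic weights, combined with the need to merge two scalings (volume- and boundary-driven) into a single bound whose constant depends only on the stated data, is where the argument is most intricate. Parts (i) and (iii), by contrast, amount to a direct application of classical Hardy inequalities in Lorentz spaces once the reduction principle is invoked.
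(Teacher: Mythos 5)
Your overall strategy — pass from the pointwise estimate of Theorem~\ref{point1} to the rearrangement estimate of Theorem~\ref{rearrest_1}, then apply the reduction principle of Theorem~\ref{reduction} and verify the resulting one-dimensional inequalities via Hardy-type theorems in Lorentz and Lorentz–Zygmund spaces — is exactly the method the paper illustrates (for the second-order analogue, Theorem~\ref{fried2_p<n/2}, in Section~\ref{proofs}).

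However, your ``schematic'' rearrangement bound misstates the structure of the operators, and the misstatement is not cosmetic. In Theorem~\ref{rearrest_1} the boundary contribution is the \emph{Hardy-type} operator $t^{-\frac{n-1}{\alpha}}\int_0^{t^{(n-1)/\alpha}}\big(u_{\partial\Omega}\big)^*_{\hh}(\rho)\,d\rho$, not the tail operator $\int_{c_2 t^{(n-1)/\alpha}}^{\hh(\partial\Omega)} s^{\frac{1}{n-1}-1}(u_{\partial\Omega})^*(s)\,ds$ you wrote. These have different mapping properties: the tail operator (with exponent $\frac{1}{n-1}-1<0$ of the kernel) on a finite interval is pointwise dominated by a multiple of $g^*(s)$ itself, whereas the Hardy operator genuinely spreads a concentrated boundary function throughout the interior and is the correct model for the spherical-averaging term $\int_{\mathbb S^{n-1}}|u_{\partial\Omega}(\zeta(x,\vartheta))|\,d\hh(\vartheta)$. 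To see concretely that they differ, take $(u_{\partial\Omega})^*=\chi_{(0,\varepsilon)}$: the paper's Hardy term decays like $t^{-(n-1)/\alpha}\varepsilon$ for large $t$, while your tail term vanishes once $c_2t^{(n-1)/\alpha}>\varepsilon$. Similarly, your gradient contribution drops the local Hardy term $t^{-\frac{n-1}{\alpha}}\int_0^{t^{n/\alpha}}|\nabla u|^*(\rho)\,d\rho$, keeping only the tail part; both are needed in the O'Neil-type bound for the Riesz potential. If you carry the reduction out with these truncated/misidentified kernels, the one-dimensional Hardy inequalities you would be checking in each of Parts (i)--(iii) are not the right ones, and the sharpness conditions ($\eta\geq\sigma$ versus $\eta\geq\varrho$, the $\vartheta$ bound in Part~(ii)) would not come out as stated. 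The fix is simply to use the operators exactly as they appear in inequalities \eqref{red1bis} and \eqref{red5} of Theorem~\ref{reduction}; everything else in your outline (Hölder for Part~(iii), Lorentz–Zygmund Hardy inequalities and \eqref{equivexp} for Part~(ii)) is in the right spirit.
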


Part (i) of Theorem \ref{Lorentz} is a counterpart in the present setting of 
classical results of \cite{Oneil, Peetre}.  The  borderline situation considered in Part (ii) corresponds to  an optimal integrability result which follows from a capacitary inequality of \cite{Ma1973} -- see \cite[Inequality (2.3.14)]{Mabook}.

\subsection{First-order poinwise estimates and reduction principle}\label{1est}

The point of departure of our method is the following pointwise estimate involving an unconventional integral operator.

\begin{thm}\label{point1}
{\bf [First-order pointwise estimate]} Let $\Omega$ be any  open set in $\rn$,
$n \geq 2$.  Then there exists a constant $C=C(n)$ such that
\begin{align}\label{point1.1}
|u(x)| & \leq C \Bigg(\int _\Omega \frac{|\nabla u(y)|}{|x-y|^{n-1}}\, dy +    \int _{\mathbb S^{n-1}}|u_{\partial \Omega}(\zeta (x, \vartheta ))|\,
d\hh (\vartheta )\Bigg) \qquad \hbox{for  $x \in \Omega$,}
\end{align}
for every  $u \in V^{1 }L^1(\Omega ) \cap C_{\rm
b}(\overline \Omega )$. Here,   convention
\eqref{conv} is adopted.
\end{thm}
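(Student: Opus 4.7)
The plan is to derive \eqref{point1.1} by integrating $\nabla u$ along each half-line emanating from $x$ until it first meets $\partial\Omega$, and then averaging over $\vartheta\in\mathbb S^{n-1}$.

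Fix $x\in\Omega$ and, for each $\vartheta\in\mathbb S^{n-1}$, set
\[
\tau(\vartheta)=\sup\{t>0:x+s\vartheta\in\Omega\ \text{for all}\ s\in(0,t)\}\in(0,\infty].
\]
Since $\Omega$ is open, $\tau(\vartheta)>0$. If $\tau(\vartheta)<\infty$, then $x+\tau(\vartheta)\vartheta=\zeta(x,\vartheta)\in(\partial\Omega)_x$; if $\tau(\vartheta)=\infty$, then $(x,\vartheta)\notin(\Omega\times\mathbb S^{n-1})_0$, and convention \eqref{conv} reads $u_{\partial\Omega}(\zeta(x,\vartheta))=0$. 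Using that $|\nabla u|\in L^1(\Omega)$ and $u\in C_b(\overline\Omega)$, I would argue via a polar Fubini that for $\hh$-a.e.\ $\vartheta\in\mathbb S^{n-1}$ the map $t\mapsto u(x+t\vartheta)$ is absolutely continuous on $[0,\tau(\vartheta))$ with classical derivative $\nabla u(x+t\vartheta)\cdot\vartheta$.

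Granted that, the fundamental theorem of calculus yields, for $\hh$-a.e.\ $\vartheta$,
\[
u(x)=u_{\partial\Omega}(\zeta(x,\vartheta))-\int_0^{\tau(\vartheta)}\nabla u(x+t\vartheta)\cdot\vartheta\,dt,
\]
where the boundary term is read via convention \eqref{conv}; when $\tau(\vartheta)<\infty$ one passes to the limit $s\to\tau(\vartheta)^-$ in the FTC relation using continuity of $u$ up to $\partial\Omega$, and when $\tau(\vartheta)=\infty$ the bounded support of $u$ forces $u(x+t\vartheta)=0$ for large $t$. Taking absolute values and integrating over $\mathbb S^{n-1}$,
\[
|\mathbb S^{n-1}|\,|u(x)|\leq \int_{\mathbb S^{n-1}}|u_{\partial\Omega}(\zeta(x,\vartheta))|\,d\hh(\vartheta)+\int_{\mathbb S^{n-1}}\int_0^{\tau(\vartheta)}|\nabla u(x+t\vartheta)|\,dt\,d\hh(\vartheta).
\]
The last double integral is recast via the polar change of variables $y=x+t\vartheta$, $dy=t^{n-1}\,dt\,d\hh(\vartheta)$; since the relevant domain of integration is contained in $\Omega$, this is dominated by $\int_\Omega |\nabla u(y)||y-x|^{1-n}\,dy$, and \eqref{point1.1} follows with $C=1/|\mathbb S^{n-1}|$.

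The main obstacle is the absolute continuity on rays, since the standard ACL characterization of Sobolev functions gives the property for $\hh$-a.e.\ line parallel to a fixed coordinate direction, not for $\hh$-a.e.\ ray through a fixed interior point. I would handle this by mollification on convex subdomains exhausting the open set $\{y\in\Omega:[x,y]\subset\Omega\}$: the smooth approximation $u_\varepsilon$ satisfies the FTC on every such ray, converges uniformly to $u$ on compacta so that pointwise values at $x$ and at $\zeta(x,\vartheta)$ pass to the limit, while $\nabla u_\varepsilon\to\nabla u$ in $L^1$ permits, along a subsequence and by Fubini in polar coordinates around $x$, the passage to the limit inside the one-dimensional integral for $\hh$-a.e.\ $\vartheta$.
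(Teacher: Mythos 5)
This note does not itself give a proof of Theorem~\ref{point1}: Section~\ref{proofs} only sketches the proofs of Theorems~\ref{fried2_p<n/2} and~\ref{compact2_sub}, and defers all the pointwise/rearrangement estimates to \cite{CianchiMazya1}. Your strategy --- write the fundamental theorem of calculus along each half-line from $x$ to its first hit on $\partial\Omega$, average over $\vartheta\in\mathbb S^{n-1}$, and turn the angular average of $\int_0^{\tau(\vartheta)}|\nabla u(x+t\vartheta)|\,dt$ into the Riesz potential by the polar change of variables $y=x+t\vartheta$ --- is the natural and essentially unique route to an estimate of this shape, and is the one underlying \cite{CianchiMazya1}; the boundary operator $\int_{\mathbb S^{n-1}}|u_{\partial\Omega}(\zeta(x,\vartheta))|\,d\hh(\vartheta)$ is precisely the residue of this averaging, and the resulting constant $C=1/\hh(\mathbb S^{n-1})$ is what you obtain.

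There is, however, a real though fixable imprecision in the step you single out as the main obstacle. First, the mollification $u_\varepsilon$ lives only on a shrunken set compactly contained in $\Omega$, so $u_\varepsilon(\zeta(x,\vartheta))$ is never defined, and the phrase ``pointwise values at $\zeta(x,\vartheta)$ pass to the limit'' cannot be taken at face value. Second, the Fubini argument you invoke to upgrade $\nabla u_\varepsilon\to\nabla u$ in $L^1(\Omega)$ to radial $L^1$-convergence is naturally phrased in polar coordinates about $x$, where the Jacobian contributes the weight $t^{n-1}$; along a subsequence and for $\hh$-a.e.\ $\vartheta$ you therefore control $\int|\nabla u_\varepsilon(x+t\vartheta)-\nabla u(x+t\vartheta)|\,t^{n-1}\,dt$, which, because $t^{n-1}$ degenerates at $t=0$, does not control the unweighted $\int_0^s|\nabla u_\varepsilon(x+t\vartheta)-\nabla u(x+t\vartheta)|\,dt$ that the fundamental theorem started at $t=0$ requires. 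Both defects vanish if you insert an extra parameter: apply the fundamental theorem to $u_\varepsilon$ on $[\delta,s]$ with $0<\delta<s<\tau(\vartheta)$, so that $t^{n-1}$ is bounded away from zero and the segment stays compactly inside $\Omega$; send $\varepsilon\to 0$ to get, for $\hh$-a.e.\ $\vartheta$,
\begin{equation*}
u(x+s\vartheta)-u(x+\delta\vartheta)=\int_\delta^s\nabla u(x+t\vartheta)\cdot\vartheta\,dt;
\end{equation*}
then send $\delta\to 0^+$ (continuity of $u$ at $x$) and $s\to\tau(\vartheta)^-$ (continuity of $u$ on $\overline\Omega$ when $\tau(\vartheta)<\infty$, bounded support when $\tau(\vartheta)=\infty$). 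Taking absolute values, integrating in $\vartheta$, and reverting to Cartesian coordinates over the star-shaped set $\{y\in\Omega:[x,y]\subset\Omega\}\subset\Omega$ gives \eqref{point1.1}. With this bookkeeping your proposal is correct.
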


Sharp endpoint continuity properties of the integral operators on the right-hand side of inequality \eqref{point1.1}, combined with interpolation techniques, yield the bound, in rearrangement form, stated in the next theorem. 

\begin{thm}\label{rearrest_1} {\bf [First-order rearrangement estimate]}
Let $\Omega$ be any open  set in $\rn$, $n \geq 2$.   Assume that
$\mu$ is a Borel measure in $\Omega$ fulfilling \eqref{measure} for
some $\alpha \in (n-1, n]$ and for some $C_\mu
>0$. Then there exist  constants $c=c(n)$ and $C=C(n,\alpha ,
C_\mu)$ such that
\begin{align}\label{rearr1}
u_\mu^*(ct)  & \leq C\Bigg(t^{-\frac
{n-1}{\alpha}}\int _0^{t^{\frac n\alpha}}|\nabla u|_{\mathcal
L^n}^*(\rho) d\rho + \int _{t^{\frac n\alpha}}^\infty \rho^{-\frac{n-1}n}
|\nabla u|_{\mathcal L^n}^*(\rho) d\rho \nonumber  \\  
& \quad \quad \quad + t^{-\frac {n-1}{\alpha}}\int _0^{t^{\frac {n-1}\alpha}}
\big(u_{\partial \Omega}\big)_{\hh}^*(\rho)d\rho\Bigg)\quad
\quad \hbox{for $t>0$,}
\end{align}
for every $u \in V^{1}L^1(\Omega ) \cap C_{\rm
b}(\overline \Omega )$.  
\end{thm}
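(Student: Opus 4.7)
The plan is to deduce the stated rearrangement inequality from the pointwise estimate of Theorem \ref{point1} by separately controlling the two operators
\[
I_1 f(x) = \int_\Omega \frac{f(y)}{|x-y|^{n-1}}\,dy
\qquad \text{and} \qquad
T\varphi(x) = \int_{\mathbb S^{n-1}} |\varphi(\zeta(x,\vartheta))|\,d\hh(\vartheta)
\]
appearing on the right-hand side of \eqref{point1.1}. Since $|u| \leq C(I_1(|\nabla u|) + T(u_{\partial\Omega}))$ on $\Omega$, the subadditivity inequality $(f+g)^*_\mu(s_1+s_2) \leq f^*_\mu(s_1) + g^*_\mu(s_2)$ reduces the task to proving rearrangement bounds for $(I_1 f)^*_\mu$ and $(T\varphi)^*_\mu$ corresponding, respectively, to the first two and the third summands on the right-hand side of \eqref{rearr1}. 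Each will be produced through a Marcinkiewicz-style truncation argument based on a pair of endpoint mapping properties of the operator in question.

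For $I_1$, I would establish the two endpoints
\[
I_1 : L^{n,1}(\rn) \to L^\infty(\Omega,\mu) \qquad \text{and} \qquad I_1 : L^1(\rn) \to L^{\alpha/(n-1),\infty}(\Omega,\mu).
\]
The first is classical and follows from $|\cdot|^{-(n-1)} \in L^{n',\infty}(\rn)$ by the H\"older inequality between $L^{n,1}$ and $L^{n',\infty}$. The second, which is where the Ahlfors regularity of $\mu$ enters, is obtained by splitting
\[
I_1 f(x) \leq R^{-(n-1)}\|f\|_{L^1} + \int_{B_R(x)}|x-y|^{-(n-1)}f(y)\,dy
\]
at the scale $R \sim (\|f\|_{L^1}/\lambda)^{1/(n-1)}$, and then controlling the $\mu$-distribution function of the second piece by Chebyshev and Fubini, reducing everything to the elementary estimate
\[
\int_{B_R(y)\cap\Omega} |x-y|^{-(n-1)}\,d\mu(x) \leq C\, R^{\alpha - (n-1)},
\]
which holds thanks to \eqref{measure} precisely because $\alpha > n-1$. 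Truncating $f$ at the level $f^*(t^{n/\alpha})$, applying the $L^\infty$ endpoint to the bounded piece and the weak-type endpoint to the tail, and adding the two contributions yields the required estimate on $(I_1 f)^*_\mu(c_1 t)$.

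The principal obstacle is the analogous bound for $T$, which maps functions on $\partial\Omega$ to functions on $\Omega$ and lacks any convolution structure. The key geometric step is the observation that the change of variables $\vartheta \mapsto y = \zeta(x,\vartheta)$ from $\mathbb S^{n-1}$ onto $(\partial\Omega)_x$ has Jacobian equal, up to a cosine factor from the angle between the ray $\{x+t\vartheta\}$ and the generalized outer normal at $y$, to $|x-y|^{-(n-1)}$, whence
\[
T\varphi(x) \leq \int_{\partial\Omega} \frac{|\varphi(y)|}{|x-y|^{n-1}}\,d\hh(y).
\]
This recasts $T$ as a Riesz-type operator with input on $\partial\Omega$, and the same two-endpoint scheme applies: the bound $T : L^\infty(\partial\Omega) \to L^\infty(\Omega,\mu)$ is trivial with constant $\hh(\mathbb S^{n-1})$, while the weak-type bound $T : L^1(\partial\Omega) \to L^{\alpha/(n-1),\infty}(\Omega,\mu)$ follows from exactly the same Ahlfors estimate as above, now with the source variable $y$ lying on $\partial\Omega$. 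Truncating $\varphi$ at the level $\varphi^*_{\hh}(t^{(n-1)/\alpha})$ and interpolating then delivers the third summand on the right-hand side of \eqref{rearr1}, completing the plan.
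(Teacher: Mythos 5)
Your strategy matches what the paper indicates: the note explicitly says that Theorem \ref{rearrest_1} follows from ``sharp endpoint continuity properties of the integral operators on the right-hand side of inequality \eqref{point1.1}, combined with interpolation techniques,'' and that is exactly your scheme. The endpoints you identify are the right ones, the Calder\'on-type truncation of $|\nabla u|$ at level $t^{n/\alpha}$ and of $u_{\partial\Omega}$ at level $t^{(n-1)/\alpha}$ does produce the three summands of \eqref{rearr1}, and the Ahlfors condition $\alpha>n-1$ enters precisely through the convergence of $\int_{B_R(y)\cap\Omega}|x-y|^{-(n-1)}d\mu(x)\lesssim R^{\alpha-(n-1)}$, as you say.

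One point deserves more care. You justify the pointwise domination
$T\varphi(x)\le C\int_{\partial\Omega}\frac{|\varphi(y)|}{|x-y|^{n-1}}\,d\hh(y)$
by a change-of-variables argument whose Jacobian involves ``a cosine factor from the angle between the ray and the generalized outer normal at $y$.'' For an arbitrary open set $\Omega$ this is not available: $\partial\Omega$ need not be $(n-1)$-rectifiable, need not carry a generalized normal $\hh$-a.e., and may even have positive Lebesgue measure, so there is no area formula to invoke. The inequality you need is nevertheless true in this generality, but it has to be derived from the Lipschitz contraction property of Hausdorff measure rather than from a Jacobian. Concretely, the radial projection $\pi_x(y)=(y-x)/|y-x|$, restricted to $(\partial\Omega)_x\cap\{2^k\le|y-x|<2^{k+1}\}$, is injective and Lipschitz with constant $\le 2^{-k}$; hence $\hh(\pi_x(E))\le 2^{-k(n-1)}\hh(E)\le 2^{n-1}\int_E |x-y|^{-(n-1)}\,d\hh(y)$ for Borel $E$ in that shell, and summing over $k\in\mathbb Z$ gives the pushforward bound $(\zeta(x,\cdot))_{\#}\big(\hh|_{\mathbb S^{n-1}}\big)\le 2^{n-1}|x-\cdot|^{-(n-1)}\,\hh|_{\partial\Omega}$, which is exactly the inequality you use. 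The same Lipschitz projection estimate is also what ensures that $T$ is well defined on $L^\infty(\partial\Omega,\hh)$ (it sends $\hh$-null subsets of $(\partial\Omega)_x$ to $\hh$-null subsets of $\mathbb S^{n-1}$), which your $L^\infty\to L^\infty$ endpoint tacitly requires since the majorizing kernel $\int_{\partial\Omega}|x-y|^{-(n-1)}d\hh(y)$ need not be finite. With this replacement of the ``Jacobian'' heuristic, the proposal is complete and in line with the paper's method.
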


The rearrangement estimate \eqref{rearr1} enables us to reduce inequalities of the form \eqref{firstineq} to considerably simpler one-dimensional Hardy type inequalities in the corresponding representation norms.

\begin{thm}\label{reduction} {\bf [First-order reduction principle]}
Let $\Omega$ be any  open set in $\rn$, $n \geq 2$. Assume that
$\mu$ is a measure in $\Omega$ fulfilling \eqref{measure} for some
$\alpha \in (n-1, n]$, and for some constant $C_\mu$. Let $X(\Omega)$, $Y(\Omega , \mu)$ and
 $Z(\partial \Omega)$  be rearrangement-invariant spaces such that
\begin{multline}\label{red1bis}
\Bigg\| \chi_{(0, \mu(\Omega))}(t)\Bigg(t^{-\frac {n-1}{\alpha}}\int _0^{t^{\frac
n\alpha}}\chi_{(0, \mathcal L^n(\Omega))}(\rho)f(\rho) d\rho\\ 
+\int _{t^{\frac n\alpha}}^\infty \rho^{-\frac{n-1}n}
\chi_{(0, \mathcal L^n(\Omega))}(\rho)f(\rho) d\rho \Bigg)\Bigg\|_{\overline Y(0, \infty )}   \leq C_1
\|\chi_{(0, \mathcal L^n(\Omega))}f\|_{\overline X(0, \infty )},
\end{multline}
and
\begin{equation}\label{red5}
\Bigg\|\chi_{(0, \mu(\Omega))}(t) t^{-\frac {n-1}{\alpha}}\int _0^{t^{\frac {n-1}\alpha}}
\chi_{(0, \hh(\partial \Omega))}(\rho)f(\rho)\, d\rho \Bigg\|_{\overline Y(0, \infty )} \leq C_2
\|\chi_{(0,  \hh(\partial \Omega))}f\|_{\overline Z(0, \infty )},
\end{equation}
for some constants $C_1$ and $C_2$, and for every non-increasing function
$f : [0, \infty) \to [0, \infty)$.   Then
\begin{align}\label{red6}
\|u\|_{Y(\Omega, \mu)} & \leq C' \Big(C_1 \|\nabla 
u\|_{X(\Omega )} +    C_2\|u_{\partial \Omega}\|_{ Z(\partial \Omega )}\Big)
\end{align}
  for some
constant $C'=C'(n)$ and for every $u \in {V^{1}X(\Omega )
\cap C_{\rm b}(\overline \Omega )}$.
\end{thm}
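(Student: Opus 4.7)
The plan is to feed the rearrangement estimate from Theorem \ref{rearrest_1} into the representation space norm $\overline Y(0, \infty)$, and then invoke the hypotheses \eqref{red1bis} and \eqref{red5} to dispose of the two groups of terms on the right-hand side. Concretely, I start from
\begin{equation*}
u_\mu^*(ct) \leq C\Bigl( I_1(t) + I_2(t) + I_3(t)\Bigr),
\end{equation*}
where $I_1$, $I_2$ are the two integrals of $|\nabla u|_{\mathcal L^n}^*$ and $I_3$ is the integral of $(u_{\partial\Omega})_{\hh}^*$ appearing in \eqref{rearr1}. Since $u_\mu^*$ is supported in $(0,\mu(\Omega))$, I may multiply both sides by $\chi_{(0,\mu(\Omega))}(t)$; likewise, $|\nabla u|_{\mathcal L^n}^*$ is supported in $(0,\mathcal L^n(\Omega))$ and $(u_{\partial\Omega})_{\hh}^*$ is supported in $(0,\hh(\partial\Omega))$, so the characteristic functions appearing in \eqref{red1bis} and \eqref{red5} can be inserted for free.

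Next, I take the $\overline Y(0,\infty)$ norm on both sides and use the triangle inequality to split into three summands. To remove the dilation $t \mapsto ct$ from the left-hand side, I use the fact that dilation operators are bounded on every rearrangement-invariant space, with norm depending only on the dilation factor; since $c = c(n)$, this cost is absorbed into the final constant $C'(n)$. This yields
\begin{equation*}
\|u\|_{Y(\Omega,\mu)} = \|u_\mu^*\|_{\overline Y(0,\infty)} \leq C'(n)\Bigl( \|I_1\|_{\overline Y} + \|I_2\|_{\overline Y} + \|I_3\|_{\overline Y}\Bigr).
\end{equation*}

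I then apply hypothesis \eqref{red1bis} with the choice $f(\rho) = |\nabla u|_{\mathcal L^n}^*(\rho)$, which is non-increasing, to bound $\|I_1\|_{\overline Y} + \|I_2\|_{\overline Y}$ by $C_1 \|\chi_{(0,\mathcal L^n(\Omega))} |\nabla u|_{\mathcal L^n}^*\|_{\overline X(0,\infty)} = C_1 \|\nabla u\|_{X(\Omega)}$, where the last equality uses the defining property \eqref{ri2} of the representation space together with the support of $|\nabla u|_{\mathcal L^n}^*$. In the same way, hypothesis \eqref{red5} applied to $f(\rho) = (u_{\partial\Omega})_{\hh}^*(\rho)$ yields $\|I_3\|_{\overline Y} \leq C_2 \|u_{\partial\Omega}\|_{Z(\partial\Omega)}$. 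Combining the three estimates gives \eqref{red6}.

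The only mildly delicate point will be the rigorous insertion of the characteristic functions and the handling of the dilation factor $c$; once those are pinned down, the argument is a direct manipulation of the rearrangement estimate and the two hypothesised one-dimensional inequalities. No extra regularity or capacitary tools are needed here because all the analytic heavy lifting has already been carried out in Theorem \ref{rearrest_1}.
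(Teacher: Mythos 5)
Your proof is correct and follows exactly the route the paper itself indicates (and defers to \cite{CianchiMazya1} for): plug the rearrangement estimate of Theorem \ref{rearrest_1} into the representation norm $\overline Y(0,\infty)$, split by the triangle inequality, absorb the dilation $t\mapsto ct$ using the uniform boundedness of dilations on rearrangement-invariant spaces over $(0,\infty)$, and then apply hypotheses \eqref{red1bis} and \eqref{red5} to $f=|\nabla u|_{\mathcal L^n}^*$ and $f=(u_{\partial\Omega})_{\hh}^*$, respectively. The characteristic-function insertion and the dilation you flag as ``mildly delicate'' are indeed handled exactly as you suggest -- the inner cut-offs are free from the support of the rearrangements, while the outer $\chi_{(0,\mu(\Omega))}$ together with the factor $c$ is controlled either by monotonicity of the expressions in $t$ (noted in the Remark after Theorem \ref{reduction}) or by the dilation bound -- so no gap remains.
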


\begin{rem} One can verify that the expression appearing in each of the norms on the left-hand sides of inequalities  \eqref{red1bis} and \eqref{red5} is a nonnegative non-increasing function of $t$. Hence, it agrees with its decreasing rearrangement. This observation can be of use when dealing with rearrangement-invariant spaces $Y(\Omega, \mu)$, such as   Lorentz and Lorentz-Zygmund spaces, whose norms are defined in terms of rearrangements.
\end{rem}

\section{Second-order inequalities}

The second-order homogeneous Sobolev space built upon 
  a rearrangement-invariant space $X(\Omega)$  is defined as
\begin{equation}\label{sobolevV2}
V^{2}X (\Omega ) =   \big\{u: \hbox{$u$ is   twice-weakly
differentiable in $\Omega$, and $|\nabla^2 u| \in
X(\Omega)$}\big\}.
\end{equation}
Here, $\nabla^2 u$ denotes the matrix of all second-order derivatives of $u$.

As mentioned in Section \ref{S:introd}, upper gradients of the restriction to $\partial \Omega$ of trial functions come into play in our second-order inequalities.  
  Let  $\varphi  : \partial \Omega \to \mathbb R$ be a measurable function. 
A Borel function 
 $g_\varphi : \partial \Omega \to \mathbb R$ is called an upper gradient for  $\varphi$    in the sense of Hajlasz \cite{Hajlasz} if
\begin{equation}\label{hajlasz}
|\varphi(x) - \varphi(y)| \leq |x -y| (g_\varphi (x) + g_\varphi  (y)) \quad \hbox{for
$\hh$-a.e. $x, y \in
\partial \Omega$.}
\end{equation}
Given a rearrangement-invariant space $X(\partial \Omega)$ on $\partial \Omega$,   we denote by $\mathcal V^{1}X(\partial \Omega)$ the space of functions $\varphi$ that admit an upper gradient in $X(\partial \Omega)$.  It is easily verified that $\mathcal V^{1}X(\partial \Omega)$ is a linear space.  Furthermore, the functional defined as
\begin{equation}\label{semi}
\|\varphi\|_{\mathcal V^{1}X(\partial \Omega)} = \inf \|g_\varphi\|_{X(\partial \Omega)},
\end{equation}
where the infimum is extended over all upper gradients $g_\varphi$ of $\varphi$ in $X(\partial \Omega)$, is a seminorm on the space $\mathcal V^{1}X(\partial \Omega)$.

Our  estimates for $u$ involving Lebesgue norms of $\nabla ^2u$ are the content of the following theorem.

\begin{thm}\label{fried2_p<n/2} {\bf [Second-order   inequalities for $u$]}
Let $\Omega$ be any open  set in $\rn$, $n \geq 3$,  with $\mathcal L^n(\Omega)<\infty$ and $\hh (\Omega)< \infty$.   Assume that
$\mu$ is a   Borel measure in $\Omega$ fulfilling \eqref{measure} for
some $\alpha \in (n-1, n]$ and for some $C_\mu
>0$, and such that $\mu (\Omega)< \infty$. 
\\ (i)
Let $1<p<\frac n2$, $1 < s< n-1$,  $r>1$, and let 
\begin{equation}\label{q2}
q= \min \bigg\{\frac {p\alpha }{n-2p}, \frac{s\alpha}{n-1-s}, \frac {r\alpha }{n-1}\bigg\}.
\end{equation}
Then there exists a constant 
$C=C(n, p, r, s, \alpha, C_\mu)$ such that
\begin{align}\label{fried4}
\|u\|_{L^q(\Omega, \mu)} \leq C \Big(&\max\{\mu(\Omega)^{\frac n\alpha}, \mathcal L^n(\Omega)\}  ^{\frac \alpha{qn}-\frac{n-2p}{pn}} \|\nabla^2 u\|_{L^p(\Omega)}  \nonumber
\\ \nonumber &
+ 
 \max\{\mu(\Omega)^{\frac {n-1}\alpha}, \hh(\partial \Omega)\} ^{\frac \alpha{q(n-1)}-\frac{n-1-s}{s(n-1)}} \|u\|_{\mathcal V^1L^s(\partial \Omega)}
\\  r &
+
 \max\{\mu(\Omega)^{\frac {n-1}\alpha}, \hh(\partial \Omega)\} 
^{\frac \alpha{q(n-1)}-\frac 1{r}} \|u\|_{L^r(\partial \Omega)}\Big)
\end{align}
for every $u \in V^{2}L^p(\Omega ) \cap \mathcal V^{1}L^{s}(\partial \Omega ) \cap C_b(\overline \Omega )$. 
\\ (ii)
 Let $\beta>0$, $s>n-1$,  and let
$$\gamma= \min\{\tfrac n{n-2}, \beta \}.$$
Then there exists a constant 
$C=C(n,  	\beta, \alpha, C_\mu, \mathcal L^n(\Omega), \hh (\Omega), \mu(\Omega))$ such that
\begin{align}\label{fried5}
\|u\|_{\exp L^\gamma (\Omega, \mu)} \leq C \Big( \|\nabla^2 u\|_{L^{\frac n2}(\Omega)} + \|u\|_{\mathcal V^1L^s(\partial \Omega)}+ \|u\|_{\exp L^\beta(\partial \Omega)}\Big)
\end{align}
for every $u \in V^{2}L^n(\Omega ) \cap \mathcal V^{1}L^s(\partial \Omega) \cap C_b(\overline \Omega )$. 
\\ (iii)
Let
$p>\frac n2$ and $s> n-1$. Then there exists a constant $C=C(n, p,  s)$ such that
\begin{align}\label{fried7}
\| u\|_{L^{\infty}(\Omega)}  \leq C \big(\mathcal L^n (\Omega)^{\frac 2n - \frac 1{p}} \|\nabla ^{2}
u\|_{L^p(\Omega )} + 
\hh (\partial \Omega)^{\frac 1{n-1}- \frac 1s} \|u\|_{\mathcal V^{1}L^{s}(\partial \Omega)} + \|u\|_{L^{\infty}(\partial \Omega )}\big)
\end{align}
for every $u \in V^{2}L^p(\Omega ) \cap \mathcal V^{1}L^{s}(\partial \Omega) \cap C_b(\overline \Omega )$
\end{thm}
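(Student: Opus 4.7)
The plan is to follow the three-step scheme of Section \ref{1est}: a second-order pointwise estimate, the associated rearrangement estimate, and a reduction to one-dimensional Hardy-type inequalities. The crucial preliminary is a second-order counterpart of Theorem \ref{point1}, which I expect to take the form
\begin{equation*}
|u(x)| \leq C \Bigg(\int_\Omega \frac{|\nabla^2 u(y)|}{|x-y|^{n-2}}\,dy + \int_{\mathbb{S}^{n-1}} |x - \zeta(x,\vartheta)|\,g_u(\zeta(x,\vartheta))\,d\hh(\vartheta) + \int_{\mathbb{S}^{n-1}} |u_{\partial\Omega}(\zeta(x,\vartheta))|\,d\hh(\vartheta)\Bigg).
\end{equation*}
Such a bound should arise by applying Theorem \ref{point1} to $u$, applying it again to a radial derivative of $u$ (whose integral along the ray from $x$ to $\zeta(x,\vartheta)$ captures the difference $u_{\partial\Omega}(\zeta(x,\vartheta))-u(x)$), and combining the two estimates through the Riesz-kernel convolution identity $\int_{\rn} |x-y|^{-(n-1)}|y-z|^{-(n-1)}\,dy \asymp |x-z|^{-(n-2)}$. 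The boundary trace of $\nabla u$ that emerges in the second application must be replaced by $g_u$ via the defining inequality \eqref{hajlasz}: along short segments of $\partial\Omega$ near $\zeta(x,\vartheta)$ the variation of $u_{\partial\Omega}$ is controlled by the factor $|x-\zeta(x,\vartheta)|\,g_u(\zeta(x,\vartheta))$, which plays the role of the missing tangential derivative. This replacement, which encodes the key novelty stressed in the introduction, is the main technical hurdle.

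Once the pointwise estimate is available, the sharp endpoint mapping properties of the Riesz kernel $|x-y|^{-(n-2)}$ and of the two boundary operators, together with an O'Neil--Peetre interpolation, yield a rearrangement estimate analogous to \eqref{rearr1}, in which the exponent of the volume integrand is shifted by one unit and $(g_u)^*_{\hh}$, $(u_{\partial\Omega})^*_{\hh}$ enter as two independent contributions. A routine adaptation of Theorem \ref{reduction} then reduces each part of Theorem \ref{fried2_p<n/2} to verifying three one-dimensional Hardy-type inequalities on $(0,\infty)$ in the representation spaces $\overline X$, $\overline Z$, $\overline U$, and $\overline Y$.

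The case-by-case verification proceeds as follows. In Part~(i), the three Hardy inequalities are classical weighted inequalities between Lebesgue spaces; the exponent $q$ in \eqref{q2} is the largest common value for which all three required embeddings hold simultaneously, and the three scaling factors multiplying the terms on the right-hand side of \eqref{fried4} arise from the embedding $L^{p_1} \subset L^{p_0}$ on finite-measure sets whenever the target exponent falls below the individual critical one associated with that term. In Part~(ii), we are in the borderline situation $p=n/2$; the corresponding Hardy inequality with a logarithmic weight yields the Pohozaev--Yudovich--Trudinger-type integrability $\exp L^{n/(n-2)}(\Omega,\mu)$, which, combined with the exponential integrability $\exp L^\beta(\partial\Omega)$ of $u_{\partial\Omega}$, produces $\gamma = \min\{n/(n-2),\beta\}$; the assumption $s>n-1$ places the $g_u$-term in a subcritical class and renders its contribution harmless. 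In Part~(iii), the conditions $p>n/2$ and $s>n-1$ render every summand on the right-hand side of the pointwise estimate supercritical, so H\"older's inequality alone bounds each in $L^\infty(\Omega)$ and yields \eqref{fried7} at once, with the explicit powers of $\mathcal L^n(\Omega)$ and $\hh(\partial\Omega)$ appearing as the natural scaling constants.
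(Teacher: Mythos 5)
Your overall architecture (pointwise estimate, rearrangement estimate, reduction to Hardy-type inequalities) matches the scheme of the paper, which invokes Theorem \ref{reduction2} as a black box and then verifies three one-dimensional Hardy-type inequalities. However, the second-order pointwise estimate you propose is incorrect, and the discrepancy is not cosmetic.

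The middle term in your proposed bound reads
\[
\int_{\mathbb S^{n-1}} |x - \zeta(x,\vartheta)|\,g_u(\zeta(x,\vartheta))\,d\hh(\vartheta),
\]
a single spherical integral with the geometric weight $|x-\zeta(x,\vartheta)|$, whereas the estimate the paper actually uses (Theorem \ref{point2}(i)) has
\[
\int_{\Omega} \int_{\mathbb S^{n-1}} \frac{g_u(\zeta(y,\vartheta))}{|x-y|^{n-1}}\,d\hh(\vartheta)\,dy,
\]
a Riesz potential, over $\Omega$, of the spherical boundary operator applied to $g_u$. Your own heuristic already points toward the latter: if you apply Theorem \ref{point1} first to $u$ and then to $\nabla u$, the boundary term coming out of the \emph{inner} application is again a spherical integral $\int_{\mathbb S^{n-1}}|(\nabla u)_{\partial\Omega}(\zeta(y,\vartheta))|\,d\hh(\vartheta)$, evaluated at the variable $y$ of the \emph{outer} Riesz convolution; only the $\nabla^2 u$ piece collapses under the Riesz-kernel identity. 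The boundary piece does not collapse, and the double-integral structure is essential. Your proposed replacement argument, asserting that \eqref{hajlasz} turns $|(\nabla u)_{\partial\Omega}(\zeta(x,\vartheta))|$ into $|x-\zeta(x,\vartheta)|\,g_u(\zeta(x,\vartheta))$, does not hold: the Haj\l asz inequality controls the variation of $u_{\partial\Omega}$ over a \emph{tangential} distance on $\partial\Omega$, whereas $|x-\zeta(x,\vartheta)|$ is the distance from the interior point $x$ to the boundary, an unrelated quantity.

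The structural difference matters downstream. The Riesz-potential form of the middle term is precisely what produces, in the rearrangement estimate \eqref{rearr2u}, the two pieces
\[
t^{-\frac{n-2}{\alpha}}\int_0^{t^{\frac{n-1}{\alpha}}}(g_u)^*_{\hh}(\rho)\,d\rho \quad\text{and}\quad \int_{t^{\frac{n-1}{\alpha}}}^\infty \rho^{-\frac{n-2}{n-1}}(g_u)^*_{\hh}(\rho)\,d\rho,
\]
and hence the exponent $\frac{s\alpha}{n-1-s}$ in the definition \eqref{q2} of $q$. A bare spherical integral with a bounded weight, as in your proposal, would only produce the same mapping properties as the $u_{\partial\Omega}$ term, i.e., $\frac{s\alpha}{n-1}$ in place of $\frac{s\alpha}{n-1-s}$, which is strictly weaker for $s>0$ and would not recover inequality \eqref{fried4}. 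Once Theorem \ref{point2}(i), Theorem \ref{rearrest_2}(i), and Theorem \ref{reduction2}(i) are taken as given, your Hardy-inequality case analysis for Parts (i)--(iii) is consistent with the paper's sketch, which reduces \eqref{fried4}, \eqref{fried5}, \eqref{fried7} to the three one-dimensional inequalities \eqref{fried35}--\eqref{fried39} (and their logarithmic analogues) and disposes of them by standard weighted-Hardy criteria.
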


\begin{rem} In the  doubly borderline case when $p=\frac n2$ and $s=n-1$, Part (ii) of Theorem \ref{fried2_p<n/2}  adimts the following variant. Let $\beta>0$ and 
$$\gamma= \min\big\{\tfrac {n-1}{n-2}, \beta \big\}.$$
Then there exists a constant 
$C=C(n,	\beta, \alpha, C_\mu, \mathcal L^n(\Omega), \hh (\Omega), \mu(\Omega))$ such that
\begin{align}\label{fried6}
\|u\|_{\exp L^\gamma (\Omega, \mu)} \leq C \Big( \|\nabla^2 u\|_{L^{\frac n2}(\Omega)} + \|u\|_{\mathcal V^1L^{n-1}(\partial \Omega)}+ \|u\|_{\exp L^\beta(\partial \Omega)}\Big)
\end{align}
for every $u \in V^{2}L^n(\Omega ) \cap \mathcal V^{1}L^{n-1}(\partial \Omega) \cap C_b(\overline \Omega )$. 
\end{rem}

The compactness of embeddings associated with the norms defined via the right-hand sides of the inequalities of Theorem \ref{fried2_p<n/2} is discussed in the next result.

\begin{thm}\label{compact2_sub}{\bf [Second-order compact   embeddings]}
Let $\Omega$ be any open  set in $\rn$, $n \geq 3$ with $\mathcal L^n(\Omega)<\infty$ and $\hh (\Omega)< \infty$.  Let
$\mu$ be a   Borel measure in $\Omega$ fulfilling \eqref{measure} for
some $\alpha \in (n-1, n]$ and for some $C_\mu
>0$, and such that $ \mu (\Omega)< \infty$.  
\\ (i) Let $1<p<\frac n2$,  $1 < s< n-1$ and  $r>1$.
Assume that
\begin{equation}\label{compnew3}
1 \leq q <\min \bigg\{\frac {p\alpha }{n-2p}, \frac{s\alpha}{n-1-s}, \frac {r\alpha }{n-1}\bigg\}.
\end{equation}
If
 $\{u_k\}$
is a bounded sequence in the space $V^{2}L^p(\Omega ) \cap \mathcal V^{1}L^s(\partial \Omega) \cap
 C_{\rm b}(\overline \Omega )$ endowed with the norm appearing
on the right-hand side of \eqref{fried4},  then $\{
u_k\}$ is a Cauchy sequence in $L^q(\Omega, \mu)$.
\\ (ii)
Let $p=\frac n2$ and $s>n-1$. Assume that
\begin{equation}\label{compnew4}
0<\gamma < \min\{\tfrac n{n-2}, \beta \}.
\end{equation}
 If
 $\{u_k\}$
is a bounded sequence in the space
$u \in V^{2}L^n(\Omega ) \cap \mathcal V^{1}L^s(\partial \Omega) \cap C_b(\overline \Omega )$ endowed with the norm appearing
on the right-hand side of inequality \eqref{fried5},  then $\{
u_k\}$ is a Cauchy sequence in $\exp L^\gamma (\Omega, \mu)$.
\end{thm}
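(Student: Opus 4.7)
The plan is to combine the boundedness statement of Theorem \ref{fried2_p<n/2} at the critical level with an equi-integrability argument and the extraction of a $\mu$-almost everywhere convergent subsequence. I will focus on Part (i); Part (ii) follows the same blueprint, using the identification \eqref{equivexp} to realize $\exp L^\gamma(\Omega,\mu)$ as a Lorentz--Zygmund space, and replacing equi-integrability by absolute continuity of the norm.

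First, set $q_0 = \min\{\frac{p\alpha}{n-2p},\,\frac{s\alpha}{n-1-s},\,\frac{r\alpha}{n-1}\}$, so that by hypothesis $1 \leq q < q_0$. Theorem \ref{fried2_p<n/2}(i) applied to each $u_k$ with $q$ replaced by $q_0$ yields that $\{u_k\}$ is bounded in $L^{q_0}(\Omega,\mu)$. Since $\mu(\Omega)<\infty$ and $q<q_0$, H\"older's inequality gives
\begin{equation*}
\int_E |u_k|^q \, d\mu \leq \mu(E)^{1-q/q_0}\,\|u_k\|_{L^{q_0}(\Omega,\mu)}^q
\end{equation*}
for every $\mu$-measurable $E\subset\Omega$, which provides uniform $\mu$-equi-integrability of $\{|u_k|^q\}$.

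Next, I would extract a (non relabeled) subsequence converging $\mu$-almost everywhere in $\Omega$. The tool is the second-order analog of the rearrangement estimate \eqref{rearr1}, which bounds $(u_k)_\mu^*$ by a sum of Hardy-type integrals acting on the rearrangements of $|\nabla^2 u_k|$, of an admissible upper gradient $g_{u_k}$ on $\partial\Omega$, and of $(u_k)_{\partial \Omega}$. Under the strict inequality $q<q_0$, each of these one-dimensional operators is compact from the corresponding source representation space into any rearrangement-invariant space whose fundamental function is strictly dominated by that of the $L^{q_0}$-target. Lifting this through the pointwise bound on which the rearrangement estimate rests (a second-order counterpart of Theorem \ref{point1}) yields compactness of the map $u \mapsto u$ from the source space into $L^1_{\rm loc}(\Omega,\mu)$, and in particular produces a $\mu$-a.e. convergent subsequence. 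Combined with the equi-integrability established above, Vitali's convergence theorem then guarantees that $\{u_k\}$ is Cauchy in $L^q(\Omega,\mu)$.

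The main obstacle is producing the $\mu$-almost everywhere convergent subsequence in the full generality of the statement, since no regularity of $\partial\Omega$ is available and $\mu$ need not be absolutely continuous with respect to $\mathcal L^n$. The classical Rellich--Kondrachov machinery is therefore inapplicable, and one has to exploit the rearrangement reduction together with the compactness of Riesz-type potentials and of the unconventional boundary-angular operator appearing in the pointwise estimate, between appropriate rearrangement-invariant representation spaces. Once this extraction is in hand, the remaining steps, including the adaptation to Part (ii) via \eqref{equivexp} and to the strictly subcritical Lorentz--Zygmund realization of $\exp L^\gamma$, proceed along standard lines.
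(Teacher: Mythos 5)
Your opening move is essentially the same as the paper's: apply Theorem \ref{fried2_p<n/2} at the critical exponent $\widehat q = q_0$ to get a uniform bound in $L^{q_0}(\Omega,\mu)$, and then exploit $q<q_0$ together with $\mu(\Omega)<\infty$ via H\"older. But from there the two arguments diverge, and the divergence is where your proposal breaks down.

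The step you describe as ``the main obstacle'' — producing a $\mu$-a.e.\ convergent subsequence — is not established, and the mechanism you propose does not work. Compactness of the one-dimensional Hardy-type operators appearing in the rearrangement estimate \eqref{rearr2u} does not ``lift'' to compactness of the embedding $u\mapsto u$. The rearrangement $u\mapsto u^*_\mu$ is nonlinear and discards all positional information, so an estimate of the form $u^*_\mu \leq T(|\nabla^2 u|^*,\dots)$ with $T$ a compact operator between representation spaces only controls the size distribution of $u$, not the location of its mass. Two trial functions can have identical rearrangements yet be far apart in $L^q(\Omega,\mu)$; a bounded sequence whose rearrangements converge need not even have a weakly convergent subsequence in the original space. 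Consequently, the claimed compactness into $L^1_{\rm loc}(\Omega,\mu)$ — and with it the $\mu$-a.e.\ convergent subsequence, and hence Vitali — is not obtained.

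The paper avoids this trap by localizing rather than working purely at the rearrangement level. Fix $\varepsilon>0$, pick a compact $K\subset\Omega$ with $\mu(\Omega\setminus K)<\varepsilon$, a cutoff $\xi\in C^\infty_0(\Omega)$ with $\xi\equiv 1$ on $K$, and a smooth open set $\Omega'$ with $\operatorname{supp}\xi\subset\Omega'\subset\Omega$. Applying Theorem \ref{fried2_p<n/2} with $\mu=\mathcal L^n$ shows the sequence is bounded in the classical Sobolev space $W^{2,p}(\Omega')$; the weighted Rellich theorem \cite[Theorem 1.4.6/1]{Mabook} on the smooth domain $\Omega'$ then gives the Cauchy property in $L^q(\Omega',\mu)$. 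The remainder is handled exactly by the H\"older estimate you wrote: $\|(1-\xi)(u_k-u_j)\|_{L^q(\Omega,\mu)}\leq \|u_k-u_j\|_{L^{\widehat q}(\Omega,\mu)}\,\mu(\operatorname{supp}(1-\xi))^{1/q-1/\widehat q}$, which is $O(\varepsilon^{1/q-1/\widehat q})$ uniformly in $k,j$. Adding the two pieces and letting $\varepsilon\to 0$ concludes. The crucial ingredient your sketch is missing is precisely this localization to a smooth interior subdomain, which is what makes a genuine Rellich-type compactness result available; no amount of compactness at the level of one-dimensional model operators can substitute for it. Part (ii) follows the same localization, replacing the weighted Rellich theorem by the compactness result of \cite[Theorem 5.3, Part (ii)]{CavMih} on $\Omega'$ and using an Orlicz--H\"older inequality outside.
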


Theorem \ref{fried2_p<n/2} has a counterpart, dealing with bounds for $\nabla u$ instead of $u$, which reads as follows.

\begin{thm}\label{fried2_grad_p<n} {\bf [Second-order   inequality for $\nabla u$]}
Let $\Omega$ be any open  set in $\rn$, $n \geq 2$, with $\mathcal L^n(\Omega)<\infty$ and $\hh (\Omega)< \infty$.  Assume that
$\mu$ is a   Borel measure in $\Omega$ fulfilling \eqref{measure} for
some $\alpha \in (n-1, n]$ and for some $C_\mu
>0$, and such that $\mu (\Omega)< \infty$. 
\\ (i) 
Let $1<p<n$, $r>1$, and let  $q$ be defined as in \eqref{q}. 
Then there exists a constant 
$C=C(n,  p, r, \alpha, C_\mu)$ such that
\begin{align}\label{fried8}
\|\nabla u\|_{L^q(\Omega, \mu)}& \leq C \Big(\max\{\mu(\Omega)^{\frac n\alpha}, \mathcal L^n(\Omega)\}^{\frac \alpha{qn}-\frac{n-p}{pn}} \|\nabla^2 u\|_{L^p(\Omega)} \nonumber
\\   &   \qquad + 
\max\{\mu(\Omega)^{\frac n\alpha}, \hh(\partial \Omega)\}^{\frac \alpha{q (n-1)}-\frac{1}{r}} \|u\|_{\mathcal V^1 L^r(\partial \Omega)}\Big)
\end{align}
for every $u \in V^{2}L^p(\Omega ) \cap \mathcal V^{1}L^r(\partial \Omega) \cap C_b(\overline \Omega )$. 
\\ (ii) 
Let $\beta>0$,  and let $\gamma$ be defined as in \eqref{gamma}.
Then there exists a constant 
$C=C(n,	\beta, \alpha, C_\mu, \mathcal L^n (\Omega), \hh (\Omega), \mu(\Omega))$ such that
\begin{align}\label{fried9}
\|\nabla u\|_{\exp L^\gamma (\Omega, \mu)} \leq C \Big( \|\nabla^2 u\|_{L^n(\Omega)} +  \|u\|_{\mathcal V^1\exp L^\beta(\partial \Omega)}\Big)
\end{align}
for every $u \in V^{2}L^n(\Omega ) \cap \mathcal V^{1}\exp L^\beta(\partial \Omega) \cap C_b(\overline \Omega )$. 
\\(iii) Let $p
>n$.  Then
 there
 exists a  constant $C=C(n, p)$
  such that
\begin{align}\label{inf2}
\|\nabla u\|_{L^\infty (\Omega )} & \leq C \Big(\mathcal L^n(\Omega)^{\frac 1n-\frac 1p}\|\nabla^2 u\|_{L^{p}(\Omega )}
 +     \|u\|_{\mathcal V^1L^\infty(\partial \Omega)}\Big)
\end{align}
for every $u \in V^{2}L^p(\Omega) \cap \mathcal V^{1}L^\infty (\partial \Omega)\cap C_b(\overline \Omega)$.
\end{thm}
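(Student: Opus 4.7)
The proof would follow the three-step scheme underlying the first-order inequalities of Theorems~\ref{fried_p<n} and~\ref{Lorentz}: a pointwise representation for $|\nabla u|$, its transfer to rearrangements, and the reduction principle of Theorem~\ref{reduction}.

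First, I would establish a second-order analogue of the pointwise bound \eqref{point1.1} in which the boundary contribution is expressed through the Hajlasz upper gradient rather than through the (non-existent) trace of $\nabla u$: for some $C=C(n)$, every $u\in V^2 L^1(\Omega)\cap C_b(\overline\Omega)$ and every upper gradient $g_u$ of $u_{\partial\Omega}$,
\begin{equation*}
|\nabla u(x)|\leq C\Bigl(\int_\Omega\frac{|\nabla^2 u(y)|}{|x-y|^{n-1}}\,dy+\int_{\mathbb S^{n-1}}g_u\bigl(\zeta(x,\vartheta)\bigr)\,d\hh(\vartheta)\Bigr),\qquad x\in\Omega,
\end{equation*}
with convention \eqref{conv}. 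The idea is to represent $\partial_i u(x)$ via the fundamental theorem of calculus along each ray from $x$ to $\zeta(x,\vartheta)$, integrate in $\vartheta$, and bound the resulting boundary piece by a Hajlasz difference quotient that the defining property \eqref{hajlasz} converts into $g_u(\zeta(x,\vartheta))$.

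From this pointwise bound, the sharp endpoint continuity properties of the Riesz potential and the spherical boundary operator that produce Theorem~\ref{rearrest_1} yield the rearrangement estimate
\begin{align*}
|\nabla u|^*_\mu(ct) & \leq C\Bigl(t^{-\frac{n-1}{\alpha}}\!\int_0^{t^{n/\alpha}}|\nabla^2 u|^*_{\L}(\rho)\,d\rho+\int_{t^{n/\alpha}}^\infty\!\rho^{-\frac{n-1}{n}}|\nabla^2 u|^*_{\L}(\rho)\,d\rho \\
& \qquad +t^{-\frac{n-1}{\alpha}}\!\int_0^{t^{(n-1)/\alpha}}(g_u)^*_{\hh}(\rho)\,d\rho\Bigr),\qquad t>0,
\end{align*}
structurally identical to \eqref{rearr1} with $|\nabla u|$ replaced by $|\nabla^2 u|$ on the volume side and $g_u$ replacing $u_{\partial\Omega}$ on the boundary side. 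The reduction principle of Theorem~\ref{reduction}, applied verbatim with $\nabla^2 u$ and $g_u$ in place of $\nabla u$ and $u_{\partial\Omega}$, then gives $\|\nabla u\|_{Y(\Omega,\mu)}\leq C'(C_1\|\nabla^2 u\|_{X(\Omega)}+C_2\|g_u\|_{Z(\partial\Omega)})$ whenever the one-dimensional Hardy inequalities \eqref{red1bis} and \eqref{red5} hold with $Y,X,Z$. Taking the infimum over admissible upper gradients converts $\|g_u\|_Z$ into $\|u\|_{\mathcal V^1 Z(\partial\Omega)}$.

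It remains to verify the 1D inequalities with the claimed constants. Part~(i) amounts to the classical Hardy inequalities on $(0,\infty)$ for non-increasing $f$, with $X=L^p$, $Z=L^r$, $Y=L^q$ and $q$ as in \eqref{q}, which is precisely the largest exponent compatible with both Hardy inequalities; the truncations $\chi_{(0,\L(\Omega))}$, $\chi_{(0,\hh(\partial\Omega))}$ and $\chi_{(0,\mu(\Omega))}$ account, by H\"older's inequality, for the prefactors $\max\{\mu(\Omega)^{n/\alpha},\L(\Omega)\}^{\alpha/(qn)-(n-p)/(pn)}$ and $\max\{\mu(\Omega)^{n/\alpha},\hh(\partial\Omega)\}^{\alpha/(q(n-1))-1/r}$. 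Part~(ii) is the borderline $p=n$, where the Riesz piece maps $L^n$ into $\exp L^{n'}$ and the boundary piece maps $\exp L^\beta(\partial\Omega)$ into $\exp L^\beta(\Omega,\mu)$, giving the exponent $\gamma$ of \eqref{gamma}. Part~(iii) is the supercritical regime $p>n$, in which both one-dimensional operators are uniformly bounded into $L^\infty$ on the relevant intervals, yielding \eqref{inf2} with the stated prefactor.

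The principal obstacle is the Step~1 pointwise estimate: one must replace the boundary trace of $\nabla u$, which is not well defined on a general irregular $\partial\Omega$, by the Hajlasz upper gradient $g_u$ of the scalar trace $u_{\partial\Omega}$. Once this substitution is justified, the Hajlasz upper gradient enters the remaining argument only through its norm, and every subsequent step proceeds in parallel with the first-order proofs.
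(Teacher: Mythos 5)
Your proposal follows exactly the route the paper sets up: the pointwise bound you would prove is precisely Theorem~\ref{point2}(ii) (inequality \eqref{point2.2}), the rearrangement estimate is Theorem~\ref{rearrest_2}(ii) (inequality \eqref{rearr2gradu}), and the reduction principle you invoke is Theorem~\ref{reduction2}(ii) (inequality \eqref{red6second}), which indeed requires only the same first-order one-dimensional Hardy inequalities \eqref{red1bis} and \eqref{red5} with $X,Y,Z$ as you describe, after which passing to the infimum over Hajlasz upper gradients produces the seminorm $\|u\|_{\mathcal V^1 Z(\partial\Omega)}$. The verification of the weighted Hardy inequalities and the extraction of the $\max\{\cdot,\cdot\}$ prefactors via change of variables and H\"older parallels the paper's sketched proof of Theorem~\ref{fried2_p<n/2}, so this is essentially the paper's argument.
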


Inequalities for functions in second-order Lorentz-Sobolev spaces, with improved target spaces, can also be derived. The conclusions about bounds for $u$ are collected in Theorem \ref{Lorentz2}. An analogue concerning estimates for $\nabla u$ holds, and calls into play the same norms as in Theorem \ref{Lorentz}. Its statement is omitted for brevity.

\begin{thm}\label{Lorentz2} {\bf [Second-order Lorentz--Sobolev inequalities]}
Let $\Omega$ be any open  set in $\rn$, $n \geq 3$,  with $\mathcal L^n(\Omega)<\infty$ and $\hh (\Omega)< \infty$.   Assume that
$\mu$ is a   Borel measure in $\Omega$ fulfilling \eqref{measure} for
some $\alpha \in (n-1, n]$ and for some $C_\mu
>0$, and such that $\mu (\Omega)< \infty$. 
\\ (i) Let $1<p<\frac n2$, $1 < s< n-1$,  $r>1$,  $1 \leq \sigma, \upsilon, \varrho \leq \infty$. 
Let  $q$ be defined as in \eqref{q2}  
and let
$$\eta \geq 
\begin{cases}
\sigma & \quad \text{if $q=\frac {p\alpha }{n-2p}$}
\\
\upsilon & \quad \text{if $q= \frac{s\alpha}{n-1-s}$}
\\ 
\varrho  & \quad \text{if $q=\frac {r\alpha }{n-1}$.}
\end{cases}
$$
Then there exists a constant 
$C=C(n, p, s, r, \eta, \sigma, \upsilon, \alpha, \varrho, C_\mu)$ such that
\begin{align}\label{L2}
\|u\|_{L^{q,\eta}(\Omega, \mu)} \leq C \Big(&\max\{\mu(\Omega)^{\frac n\alpha}, \mathcal L^n(\Omega)\}  ^{\frac \alpha{qn}-\frac{n-2p}{pn}} \|\nabla^2 u\|_{L^{p,\sigma}(\Omega)}  \nonumber
\\ \nonumber &
+ 
 \max\{\mu(\Omega)^{\frac {n-1}\alpha}, \hh(\partial \Omega)\} ^{\frac \alpha{q(n-1)}-\frac{n-1-s}{s(n-1)}} \|u\|_{\mathcal V^1 L^{s, \upsilon}(\partial \Omega)}
\\  &
+
 \max\{\mu(\Omega)^{\frac {n-1}\alpha}, \hh(\partial \Omega)\} 
^{\frac \alpha{q(n-1)}-\frac 1{r}} \|u\|_{L^{r, \varrho}(\partial \Omega)}\Big)
\end{align}
for every $u \in V^{2}L^{p, \eta}(\Omega ) \cap \mathcal V^{1}L^{s,\upsilon}(\partial \Omega ) \cap C_b(\overline \Omega )$. 
\\ (ii) Assume that   $\sigma , \upsilon,  \varrho >1$ and $\varsigma < - \tfrac 1\varrho$.  Let 
$$\eta \geq \max \{\sigma, \upsilon,  \varrho\},$$
and 
$$\theta \leq  \min \Big\{-1+ \tfrac 1\sigma - \tfrac 1\eta, - 1 + \tfrac 1\upsilon-\tfrac 1 \eta, \varsigma + \tfrac 1 \varrho -\tfrac 1\eta\Big\}.$$
Then there
 exists a constant $C=C(n, \eta, \sigma, \varrho, \theta, \upsilon, \varsigma, \alpha, C_\mu, \mathcal L^n (\Omega), \mu(\Omega), \hh (\partial \Omega))$ such that
 \begin{align}\label{BW2}
\|u\|_{L^{\infty, \eta; \theta}(\Omega, \mu)}  \leq
C\Big( \|\nabla^2 u\|_{L^{\frac n2,\sigma}(\Omega )}
  + \|u\|_{\mathcal V^1L^{n-1, \upsilon}(\partial \Omega)}+
 \|u\|_{L^{\infty, \varrho; \varsigma}(\partial \Omega)}\Big)
\end{align}
for every $u \in V^{1}L^{n,\sigma}(\Omega) \cap \mathcal V ^{1} L^{n-1, \upsilon}(\partial \Omega) \cap C_b(\overline \Omega )$. 
\\ (iii) Assume that either $p=\frac n2$ and $\sigma =1$, or $p>n$ and $1\leq \sigma\leq \infty$, and either $s=n-1$ and $\upsilon =1$, or $s>n-1$ and $1 \leq \upsilon \leq \infty$. 
Then there
 exists a constant $C=C(n, p, \sigma, s, \upsilon, \alpha, C_\mu)$ such that
\begin{align}\label{infL2}
\|u\|_{L^\infty (\Omega )} & \leq C \Big(\mathcal L^n(\Omega)^{\frac 2n-\frac 1p}\|\nabla^2 u\|_{L^{p,\sigma}(\Omega )} + \hh (\partial \Omega)^{\frac 1{n-1}- \frac 1s}  \|u\|_{\mathcal V^1 L^{s, \upsilon}(\partial \Omega)}
 +     \|u\|_{L^\infty(\partial \Omega)}\Big)
\end{align}
for every $u \in V^{1}L^{p,\sigma}(\Omega) \cap \mathcal V^1L^{s, \upsilon}(\partial \Omega) \cap C_b(\overline \Omega)$.
\end{thm}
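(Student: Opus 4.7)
My plan is to extend the rearrangement--reduction scheme of Subsection \ref{1est} to second order and then to Lorentz and Lorentz--Zygmund norms, mirroring how Theorem \ref{Lorentz} extends Theorem \ref{fried_p<n}. Concretely, I would first couple the pointwise estimate of Theorem \ref{point1}, applied to each component of $\nabla u$, with the Hajlasz inequality \eqref{hajlasz} for the boundary trace of $\nabla u$, and then feed the outcome into Theorem \ref{point1} again, now applied to $u$ itself. This yields a pointwise bound on $|u(x)|$ by three contributions: the classical Riesz potential of order $2$ of $|\nabla^2 u|$ over $\Omega$, a first-order boundary integral operator acting on any upper gradient $g_u$ of $u_{\partial\Omega}$, and the same $0$-order boundary operator appearing in \eqref{point1.1} acting on $u_{\partial\Omega}$. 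An O'Neil-type argument on the decreasing rearrangements (exactly as in the passage from Theorem \ref{point1} to Theorem \ref{rearrest_1}) then produces a second-order rearrangement estimate with three Hardy-type kernels in place of the two in \eqref{rearr1}.

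Equipped with this estimate, I would state and use a second-order reduction principle in the spirit of Theorem \ref{reduction}: the inequality \eqref{L2} follows as soon as the three one-dimensional averaging operators dictated by those kernels are bounded from $\overline{X}(0,\infty)$, $\overline{U}(0,\infty)$ and $\overline{Z}(0,\infty)$ into $\overline{Y}(0,\infty)$, with the appropriate truncations by $\chi_{(0,\mathcal L^n(\Omega))}$, $\chi_{(0,\hh(\partial\Omega))}$ and $\chi_{(0,\mu(\Omega))}$. Each of the three parts then reduces to verifying such Hardy inequalities. For Part (i), these are the classical Lorentz-space Hardy inequalities of O'Neil--Peetre type: the three natural critical exponents $\tfrac{p\alpha}{n-2p}$, $\tfrac{s\alpha}{n-1-s}$, $\tfrac{r\alpha}{n-1}$ arise directly from the scaling of the three kernels, and the common target $L^{q,\eta}$ with $q$ the minimum of these three is reached by embedding each individual Lorentz target into $L^{q,\eta}$, which produces the maximum-measure factors displayed in the statement. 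Part (iii) is dispatched similarly: in the supercritical regime the same three Hardy operators map into $L^{\infty}$, and Hölder's inequality in Lorentz spaces accounts for the constants $\mathcal L^n(\Omega)^{2/n-1/p}$ and $\hh(\partial\Omega)^{1/(n-1)-1/s}$.

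The main obstacle is Part (ii), the doubly borderline case $p=\tfrac{n}{2}$, $s=n-1$. There the two Riesz-type kernels fail to map into $L^{\infty}$ and instead produce Lorentz--Zygmund targets with negative logarithmic exponents, so the relevant endpoint Hardy inequalities take the schematic form
\[
\bigl\|t^{-1}\textstyle\int_0^t f\bigr\|_{L^{\infty,\sigma;-1+1/\sigma-1/\eta}} \lesssim \|f\|_{L^{n/2,\sigma}}
\]
and its boundary analogue in $L^{n-1,\upsilon}$ for the upper gradient. I would then invoke the embeddings among Lorentz--Zygmund spaces from \cite{EOP,glz} and \cite[Chapter~9]{PKJF} to place all three contributions -- the two endpoint Riesz-type terms and the boundary-trace term $\|u\|_{L^{\infty,\varrho;\varsigma}(\partial\Omega)}$ -- into the common space $L^{\infty,\eta;\theta}(\Omega,\mu)$. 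This is where the prescribed constraints $\eta\geq\max\{\sigma,\upsilon,\varrho\}$ and $\theta\leq\min\{-1+\tfrac1\sigma-\tfrac1\eta,\,-1+\tfrac1\upsilon-\tfrac1\eta,\,\varsigma+\tfrac1\varrho-\tfrac1\eta\}$ arise, and where the side condition $\varsigma<-\tfrac1\varrho$ is needed for the boundary-trace norm to be nontrivial. Tracking the dependence of the constant on $\mathcal L^n(\Omega)$, $\mu(\Omega)$ and $\hh(\partial\Omega)$ through the truncated Hardy estimates in this critical regime is where most of the bookkeeping concentrates.
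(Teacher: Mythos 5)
Your overall route is the one the paper intends: invoke the second-order reduction principle (Theorem~\ref{reduction2}(i)) and then verify the three one-dimensional Hardy-type inequalities in the representation norms, exactly as in the sketch of Theorem~\ref{fried2_p<n/2}. For Part~(i) these are O'Neil--Peetre Hardy inequalities in Lorentz spaces; for Part~(iii) they land in $L^\infty$; for Part~(ii) they require endpoint Lorentz--Zygmund estimates. That is the correct plan.

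There is, however, one spot where your sketch would not survive being made rigorous, and it is worth naming because it concerns the novel point of the second-order machinery. You propose to derive the second-order pointwise estimate by applying Theorem~\ref{point1} to each $\partial_i u$ and then ``coupling with the Hajlasz inequality for the boundary trace of $\nabla u$.'' Applying Theorem~\ref{point1} to $\partial_i u$ produces a boundary term involving the trace $(\partial_i u)_{\partial\Omega}$, i.e.\ the full boundary trace of $\nabla u$, including its normal part. The Hajlasz inequality~\eqref{hajlasz}, with $g_u$ an upper gradient of $u_{\partial\Omega}$, only controls differences of $u_{\partial\Omega}$ along $\partial\Omega$; it provides no bound on $(\partial_i u)_{\partial\Omega}$ in general. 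Getting $g_u$ to replace $\nabla u|_{\partial\Omega}$ is precisely the delicate step behind Theorem~\ref{point2}, and it is not a formal iteration of Theorem~\ref{point1}. Fortunately you do not need to reprove it: Theorems~\ref{point2}, \ref{rearrest_2} and \ref{reduction2} are already supplied, so you should simply cite Theorem~\ref{reduction2}(i) and go straight to the Hardy inequalities. A smaller remark on Part~(ii): the schematic inequality you display captures only the ``local'' kernel $t^{-1}\int_0^t f$; the reduction actually requires bounding both pieces of each of~\eqref{red2.1bis} and~\eqref{red2.3bis} (a local average and a tail integral $\int_t^{\ell}\rho^{-\theta}f\,d\rho$), and the logarithmic weight must be carried through both, as in~\eqref{fried30}--\eqref{fried32}, before the Lorentz--Zygmund embeddings are applied.
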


\subsection{Second-order pointwise estimates and reduction principle}\label{2est}

We collect here second-order versions of the results of Section \ref{1est}, namely pointwise estimates, rearrangement-estimates, and reduction principles, for both   $u$ and   $\nabla u$.

\begin{thm}\label{point2}
{\bf [Second-order pointwise estimates]} Let $\Omega$ be any  open set in $\rn$.
\\ (i) Assume that $n \geq 3$. There exists a constant $C=C(n)$ such that
\begin{align}\label{point2.1}
|u(x)| & \leq C \bigg(\int _\Omega \frac{|\nabla
^{2}u(y)|}{|x-y|^{n-2}}\, dy + \int
_{\Omega}  \int _{\mathbb S^{n-1}}\frac{g_u(\zeta (y,\vartheta ))}{|x-y|^{n-1}} \, d\hh (\vartheta ) \,
dy \nonumber
\\  & \qquad \qquad +    \int _{\mathbb S^{n-1}}|u_{\partial \Omega}(\zeta (x, \vartheta ))|\,
d\hh (\vartheta )\bigg) \qquad \hbox{for   $x \in \Omega$,}
\end{align}
for every  $u \in V^{2}L^1(\Omega ) \cap \mathcal V^{1}L^1(\partial \Omega) \cap C_{\rm
b}(\overline \Omega )$. 
\\ (ii)  Assume that $n \geq 2$. There exists a constant $C=C(n)$ such that
\begin{align}\label{point2.2}
|\nabla u(x)|  \leq C \Bigg(\int _\Omega \frac{|\nabla
^{2}u(y)|}{|x-y|^{n-1}}\, dy + \int _{\mathbb S^{n-1}}g_u(\zeta (x, \vartheta ))\,
d\hh (\vartheta )\Bigg)
 \qquad \hbox{for a.e. $x \in \Omega$,}
\end{align}
for every  $u \in V^{2}L^1(\Omega ) \cap \mathcal V^{1}L^1(\partial \Omega) \cap C_{\rm
b}(\overline \Omega )$. 
\\ In inequalities \eqref{point2.1} and \eqref{point2.2}, $g_u$ denotes any Hajlasz gradient of $u_{\partial \Omega}$, and convention
\eqref{conv} is adopted.
\end{thm}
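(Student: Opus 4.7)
Both bounds arise from a single scheme: integrate the fundamental theorem of calculus along rays from $x$ to $\partial\Omega$ to derive~(ii), then iterate with Theorem~\ref{point1} and collapse the resulting composition of Riesz kernels to get~(i).

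For part~(ii), fix $x\in\Omega$ and first assume $u\in C^{2}(\overline\Omega)$. For every $\vartheta\in\mathbb{S}^{n-1}$ admissible at $x$ in the sense of \eqref{finite}, set $t=t(x,\vartheta)$ so that $\zeta(x,\vartheta)=x+t\vartheta$, and apply the fundamental theorem to $s\mapsto\nabla u(x+s\vartheta)$ on $[0,t]$:
$$\nabla u(x)=\nabla u(\zeta(x,\vartheta))-\int_{0}^{t}\bigl(\nabla^{2}u(x+s\vartheta)\bigr)\vartheta\,ds.$$
Taking Euclidean norms, averaging in $\vartheta$ over $\mathbb{S}^{n-1}$ (with convention~\eqref{conv}), and converting the iterated integral of $|\nabla^{2}u|$ to polar coordinates about $x$ produces the Riesz integral $\int_{\Omega}|\nabla^{2}u(y)|/|x-y|^{n-1}\,dy$. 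What remains on the right is $\int_{\mathbb{S}^{n-1}}|\nabla u(\zeta(x,\vartheta))|\,d\hh(\vartheta)$, which must be dominated by the corresponding sphere integral of $g_{u}$. For smooth $u$, a Hardy--Littlewood type maximal enlargement of $|\nabla u|_{\partial\Omega}$ built using straight chords in $\overline\Omega$ furnishes an admissible Hajlasz upper gradient of $u_{\partial\Omega}$ that pointwise dominates $|\nabla u|_{\partial\Omega}$; once this is done for the canonical choice of $g_{u}$, a density argument combined with the infimum characterization of the seminorm~\eqref{semi} extends~\eqref{point2.2} to any $u$ in the stated class and any admissible upper gradient.

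For part~(i), feed the pointwise bound from~(ii) into Theorem~\ref{point1} applied to $u$. The Riesz term of Theorem~\ref{point1} generates the nested integral
$$\int_{\Omega}\frac{1}{|x-y|^{n-1}}\int_{\Omega}\frac{|\nabla^{2}u(z)|}{|y-z|^{n-1}}\,dz\,dy,$$
which by Fubini and the classical Riesz composition identity $\int_{\rn}|x-y|^{-(n-1)}|y-z|^{-(n-1)}\,dy=c(n)|x-z|^{-(n-2)}$ (valid because $n\geq 3$) is controlled by $\int_{\Omega}|\nabla^{2}u(z)|/|x-z|^{n-2}\,dz$, the first term of~\eqref{point2.1}. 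Interchanging the order of integration in the Riesz-times-sphere contribution produces exactly the middle term of~\eqref{point2.1}, and the boundary integral already present in Theorem~\ref{point1} is the third.

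The main obstacle lies in part~(ii), specifically in converting the trace of $|\nabla u|$ at the boundary into a genuine Hajlasz upper gradient of $u_{\partial\Omega}$. At an irregular $\partial\Omega$ there is no tangent-plane decomposition available, so the control must come purely from the metric condition~\eqref{hajlasz} via maximal-function estimates along chords of $\overline\Omega$; one then passes to the limit through an approximation scheme that keeps the $\mathcal V^{1}L^{1}(\partial\Omega)$-seminorm under control. Once this step is settled the remaining ingredients---polar coordinates, Fubini, and the Riesz composition formula---are routine.
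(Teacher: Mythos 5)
The proposal has a genuine gap in part (ii), and since (i) is derived by feeding (ii) into Theorem~\ref{point1}, the gap propagates to the whole theorem.

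Applying the fundamental theorem of calculus to $s\mapsto\nabla u(x+s\vartheta)$ and averaging over $\vartheta$ leaves, besides the Riesz integral of $|\nabla^2 u|$, the boundary term $\int_{\mathbb S^{n-1}}|\nabla u(\zeta(x,\vartheta))|\,d\hh(\vartheta)$. This cannot be dominated by $\int_{\mathbb S^{n-1}}g_u(\zeta(x,\vartheta))\,d\hh(\vartheta)$ for an \emph{arbitrary} Hajlasz gradient $g_u$ of the trace $u_{\partial\Omega}$. The Hajlasz condition \eqref{hajlasz} only sees metric variation of $u$ along $\partial\Omega$; it carries no information at all about the normal component of $\nabla u$ at $\partial\Omega$. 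Concretely, if $u_{\partial\Omega}\equiv 0$ then $g_u\equiv 0$ is an admissible Hajlasz gradient, yet $\nabla u$ may be nonzero on $\partial\Omega$, so the proposed bound would fail. The ``maximal enlargement plus density'' device does not repair this: even if for smooth $u$ you can manufacture one specific upper gradient $\tilde g_u$ dominating $|\nabla u|_{\partial\Omega}$ pointwise, the statement must hold for every Hajlasz gradient $g_u$, and the infimum in \eqref{semi} is a norm-level infimum — it cannot convert a pointwise estimate in terms of $\tilde g_u$ into one in terms of an arbitrary and possibly much smaller $g_u$.

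What is needed is a genuinely second-order device that produces only \emph{differences of $u$ at two boundary points}, which the Hajlasz inequality does control. Expand $u$ by Taylor's formula to second order along the two opposite rays $s\mapsto x\pm s\vartheta$, meeting $\partial\Omega$ at $\zeta_\pm=\zeta(x,\pm\vartheta)$ with $|x-\zeta_\pm|=t_\pm$. Subtracting the two expansions eliminates $u(x)$ and gives
\begin{equation*}
(t_++t_-)\,\partial_\vartheta u(x)=u(\zeta_+)-u(\zeta_-)-\int_0^{t_+}(t_+-s)\,\partial^2_\vartheta u(x+s\vartheta)\,ds+\int_0^{t_-}(t_--s)\,\partial^2_\vartheta u(x-s\vartheta)\,ds.
\end{equation*}
Since $|\zeta_+-\zeta_-|=t_++t_-$, the Hajlasz inequality \eqref{hajlasz} yields $|u(\zeta_+)-u(\zeta_-)|\le(t_++t_-)\big(g_u(\zeta_+)+g_u(\zeta_-)\big)$; dividing by $t_++t_-$, averaging over $\vartheta\in\mathbb S^{n-1}$ (which recovers $|\nabla u(x)|$ up to a dimensional constant from $|\partial_\vartheta u(x)|$), and converting the remainder integrals to polar coordinates about $x$ produces exactly \eqref{point2.2}, with the arbitrary $g_u$ appearing where it should. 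Once (ii) is in hand in this form, your derivation of (i) — substitution into Theorem~\ref{point1}, Fubini, and the Riesz composition $I_1\circ I_1\lesssim I_2$ valid for $n\ge 3$ — is correct.
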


\begin{thm}\label{rearrest_2} {\bf [Second-order rearrangement estimates]}
Let $\Omega$ be any open  set in $\rn$.   Assume that
$\mu$ is a Borel measure in $\Omega$ fulfilling \eqref{measure} for
some $\alpha \in (n-1, n]$ and for some $C_\mu
>0$. 
\\ (i) Assume that $n \geq 3$. There exist  constants $c=c(n)$ and $C=C(n,\alpha ,
C_\mu)$ such that
\begin{align}\label{rearr2u}
u_\mu^*(ct)  & \leq C\Bigg(t^{-\frac
{n-2}{\alpha}}\int _0^{t^{\frac n\alpha}}|\nabla ^{2} u|_{\mathcal
L^n}^*(\rho) d\rho + \int _{t^{\frac n\alpha}}^\infty r^{-\frac{n-2}n}
|\nabla ^{2} u|_{\mathcal L^n}^*(\rho) d\rho \nonumber
\\ \nonumber & \quad \quad \quad +
 t^{-\frac {n-2}{\alpha}}\int
_0^{t^{\frac {n-1}\alpha}}  \big(g _u \big)_{\hh}^*(\rho)d\rho
  +  \int_{t^{\frac
{n-1}\alpha}}^\infty \rho^{-\frac {n-2}{n-1}}
\big(g_u \big)_{\hh}^*(\rho)d\rho
\\   & \quad \quad \quad + t^{-\frac {n-1}{\alpha}}\int _0^{t^{\frac {n-1}\alpha}}
\big(u_{\partial \Omega}\big)_{\hh}^*(\rho)d\rho\Bigg)\quad
\quad \hbox{for $t>0$,}
\end{align}
for every $u \in V^{2}L^1(\Omega ) \cap \mathcal V^{1}L^1(\partial \Omega) \cap C_{\rm
b}(\overline \Omega )$.  
\\ (ii) Assume that $n \geq 2$. There exist  constants $c=c(n)$ and $C=C(n,\alpha ,
C_\mu)$ such that
\begin{align}\label{rearr2gradu}
|\nabla u|_\mu^*(ct)  & \leq C\Bigg(t^{-\frac
{n-1}{\alpha}}\int _0^{t^{\frac n\alpha}}|\nabla^2 u|_{\mathcal
L^n}^*(\rho) d\rho+ \int _{t^{\frac n\alpha}}^\infty \rho^{-\frac{n-1}n}
|\nabla^2 u|_{\mathcal L^n}^*(\rho) d\rho \nonumber
\\   & \quad \quad \quad + t^{-\frac {n-1}{\alpha}}\int _0^{t^{\frac {n-1}\alpha}}
\big(g_u\big)_{\hh}^*(\rho)d\rho\Bigg)\quad
\quad \hbox{for $t>0$,}
\end{align}
for every $u \in V^{2}L^1(\Omega ) \cap \mathcal V^{1}L^1(\partial \Omega)  \cap C_{\rm
b}(\overline \Omega )$. 
\\ In inequalities \eqref{rearr2u}  and \eqref{rearr2gradu}, $g_u$ denotes
any Hajlasz gradient of $u_{\partial \Omega}$.
\end{thm}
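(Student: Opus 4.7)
The plan is to derive both parts of the theorem by inserting the pointwise bounds from Theorem \ref{point2} into suitable $\mu$-rearrangement estimates for each of the integral operators appearing on their right-hand sides. Since, for a sum of nonnegative functions, one has $\bigl(\sum_i \phi_i\bigr)^*(kt)\le \sum_i \phi_i^*(t)$, it is enough to estimate the $\mu$-rearrangement of each individual term and add the resulting bounds, absorbing the factor $k$ into the constant $c$ on the left-hand side.

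For Part (ii), inequality \eqref{point2.2} bounds $|\nabla u(x)|$ by a constant times $I_1(|\nabla^2 u|)(x)+B(g_u)(x)$, where $I_1 f(x)=\int_\Omega |x-y|^{1-n}f(y)\,dy$ is the first-order Riesz potential on $\Omega$ and $B\varphi(x)=\int_{\mathbb S^{n-1}}\varphi(\zeta(x,\vartheta))\,d\hh(\vartheta)$ is the boundary trace operator already appearing in Theorem \ref{rearrest_1}. Under assumption \eqref{measure}, an O'Neil type $\mu$-rearrangement bound for $I_1$ reads
\begin{equation*}
(I_1 f)_\mu^*(ct)\le C\Big(t^{-\frac{n-1}{\alpha}}\!\int_0^{t^{n/\alpha}}\!\!f^*_{\mathcal L^n}(\rho)\,d\rho+\int_{t^{n/\alpha}}^\infty\!\!\rho^{-\frac{n-1}{n}}f^*_{\mathcal L^n}(\rho)\,d\rho\Big),
\end{equation*}
while the corresponding estimate for $B$ is essentially the last summand in \eqref{rearr1}, namely $(B\varphi)_\mu^*(ct)\le Ct^{-\frac{n-1}{\alpha}}\int_0^{t^{(n-1)/\alpha}}\varphi^*_{\hh}(\rho)\,d\rho$. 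Applying the first estimate with $f=|\nabla^2 u|$ and the second with $\varphi=g_u$, and summing, yields \eqref{rearr2gradu}.

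Part (i) proceeds along the same lines, starting from \eqref{point2.1}. The first and the last terms of the right-hand side of \eqref{point2.1} are now a Riesz potential of order $2$ (kernel $|x-y|^{2-n}$) and the boundary operator $B(u_{\partial\Omega})$; their $\mu$-rearrangement estimates produce the first two integrals and the last integral of \eqref{rearr2u}, with the exponents $n-2$ and $n-1$ governed by the respective orders of the operators. The middle term $\int_\Omega |x-y|^{1-n}B(g_u)(y)\,dy=I_1(Bg_u)(x)$ is handled by composing the two rearrangement estimates: one first bounds $(Bg_u)^*_{\mathcal L^n}(\rho)$ via the $\alpha=n$ version of the estimate for $B$ and then feeds that bound into the rearrangement estimate for $I_1$. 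A Fubini-type interchange of the order of integration and a simplification of the resulting nested integrals generate precisely the two middle terms of \eqref{rearr2u}, the characteristic decay $\rho^{-(n-2)/(n-1)}$ and the break at $t^{(n-1)/\alpha}$ being a direct consequence of the composition of a first-order potential with a boundary operator of scale $(n-1)/\alpha$.

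The main technical step is the derivation of the O'Neil type $\mu$-rearrangement estimates for $I_\beta$ and for $B$ when $\mu$ is only $\alpha$-upper Ahlfors regular with $\alpha\in(n-1,n]$; these are established by splitting the potentials into a local and a non-local part at the scale $|x-y|\simeq t^{1/\alpha}$ and then using \eqref{measure} to bound the $\mu$-measure of the relevant super-level sets, exactly as in the proof of Theorem \ref{rearrest_1}. Once these building blocks are in place, the remainder of the argument is a straightforward bookkeeping of exponents arising from the composition.
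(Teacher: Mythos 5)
Your proposal is correct and follows the same paradigm the paper invokes: start from the pointwise bounds of Theorem \ref{point2} and feed each constituent integral operator through an O'Neil-type $\mu$-rearrangement estimate established by splitting at scale $t^{1/\alpha}$ and using the Frostman condition \eqref{measure}. Part (ii) and the first and last terms of Part (i) then fall out immediately, and your treatment of the middle term of \eqref{point2.1} as the composition $I_1(Bg_u)$ is also correct: bounding $(Bg_u)^*_{\mathcal L^n}(\rho)$ by $C\rho^{-(n-1)/n}\int_0^{\rho^{(n-1)/n}}(g_u)^*_{\hh}(\sigma)\,d\sigma$, substituting into the $\mu$-rearrangement estimate for $I_1$, changing variables $\tau=\rho^{(n-1)/n}$, and applying Fubini (which uses $n\ge 3$ so that $\frac{2n-3}{n-1}>1$) indeed produces exactly the two middle terms of \eqref{rearr2u}, with the break at $t^{(n-1)/\alpha}$ and the tail weight $\rho^{-(n-2)/(n-1)}$. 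One small detail you gloss over is the dilation constants: the intermediate bound on $(Bg_u)^*_{\mathcal L^n}$ holds at $c'\rho$ rather than at $\rho$, so the upper endpoint of the resulting integral is $(t^{n/\alpha}/c')^{(n-1)/n}$ rather than $t^{(n-1)/\alpha}$; this is harmless because $\int_0^{bs}\varphi^*\le b\int_0^{s}\varphi^*$ for $b\ge1$ and any nonincreasing $\varphi^*$, but it should be said. The survey itself does not present a proof of Theorem \ref{rearrest_2} and defers to \cite{CianchiMazya1}, where the operator $\int_\Omega\int_{\mathbb S^{n-1}}\frac{g(\zeta(y,\vartheta))}{|x-y|^{n-1}}\,d\hh(\vartheta)\,dy$ is handled directly as a single kernel operator rather than by composing two rearrangement bounds; both routes give the same result, and your composition argument has the advantage of reusing the first-order building blocks of Theorem \ref{rearrest_1}.
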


\begin{thm}\label{reduction2} {\bf [Second-order reduction principles]}
Let $\Omega$ be any  open set in $\rn$. Assume that
$\mu$ is a measure in $\Omega$ fulfilling \eqref{measure} for some
$\alpha \in (n-1, n]$, and for some constant $C_\mu$.  
\\ (i)  Assume that $n \geq 3$. Let $X(\Omega)$, $Y(\Omega , \mu)$, 
 $U(\partial \Omega)$ and $Z(\partial \Omega)$  be rearrangement-invariant spaces such that
\begin{multline}\label{red2.1bis}
\Bigg\| \chi_{(0, \mu(\Omega))}(t)  \Bigg(t^{-\frac {n-2}{\alpha}}\int _0^{t^{\frac
n\alpha}}\chi_{(0, \mathcal L^n(\Omega))}(\rho)f(\rho) d\rho \\ + \int _{t^{\frac n\alpha}}^\infty \rho^{-\frac{n-2}n}\chi_{(0, \mathcal L^n(\Omega))}(\rho)
f(\rho) d\rho\Bigg) \Bigg\|_{\overline Y(0, \infty )} \leq C_1
\|\chi_{(0, \mathcal L^n(\Omega))}f\|_{\overline X(0, \infty )},
\end{multline}
\begin{multline}\label{red2.3bis}
\Bigg\|\chi_{(0, \mu(\Omega))}(t)\Bigg(t^{-\frac {n-2}{\alpha}}\int _0^{t^{\frac {n-1}\alpha}}\chi_{(0, \hh(\partial \Omega))}(\rho)
f(\rho)d\rho \\ + \int_{t^{\frac {n-1}\alpha}}^\infty  \rho^{-\frac {n-2}{n-1}}\chi_{(0, \hh(\partial \Omega))}(\rho)
f(\rho)d\rho \Bigg)\Bigg\|_{\overline Y(0, \infty )} \leq C_2
\|\chi_{(0, \hh(\partial \Omega))}f\|_{\overline U(0, \infty )} 
\end{multline}
\begin{multline}\label{red2.5}
\Bigg\| \chi_{(0, \mu(\Omega))}(t) t^{-\frac {n-1}{\alpha}}\int _0^{t^{\frac {n-1}\alpha}}\chi_{(0, \hh(\partial \Omega))}(\rho)
f(\rho)\, d\rho \Bigg\|_{\overline Y(0, \infty )}\\  \leq C_3
\|\chi_{(0, \hh(\partial \Omega))}f\|_{\overline Z(0, \infty )},
\end{multline}
for some constants $C_1$, $C_2$ and $C_3$, and for every non-increasing function
$f : [0, \infty) \to [0, \infty)$. 
Then
\begin{align}\label{red2.6}
\| u\|_{Y(\Omega, \mu)} & \leq C' \Big(C_1\|\nabla ^{2}
u\|_{X(\Omega )} +   C_2\|u\|_{\mathcal V^1
U(\partial \Omega )} + C_3\|u_{\partial \Omega}\|_{
Z(\partial \Omega )}  \Big)
\end{align}
for some
constant $C'=C'(n)$ and
 for every $u \in {V^{2}X(\Omega )
\cap \mathcal V^1U(\partial \Omega) \cap C_{\rm b}(\overline \Omega )}$.
\\ (ii)   Assume that $n \geq 2$. Let $X(\Omega)$, $Y(\Omega , \mu)$ 
  and $Z(\partial \Omega)$  be rearrangement-invariant spaces such that inequalities \eqref{red1bis} and \eqref{red5} hold. Then 
\begin{align}\label{red6second}
\|\nabla u\|_{Y(\Omega, \mu)} & \leq C' \Big(C_1 \|\nabla^2
u\|_{X(\Omega )} +    C_2\|u \|_{\mathcal V^1 Z(\partial \Omega )}\Big)
\end{align}
  for some
constant $C'=C'(n)$ and for every $u \in {V^{2}X(\Omega )\cap \mathcal V^1Z(\partial \Omega)
\cap C_{\rm b}(\overline \Omega )}$.
\end{thm}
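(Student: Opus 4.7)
The plan is to apply the $\overline Y(0,\infty)$ norm to both sides of the rearrangement estimates of Theorem~\ref{rearrest_2} and then invoke the one-dimensional hypotheses \eqref{red2.1bis}--\eqref{red2.5} (respectively \eqref{red1bis}--\eqref{red5}) to control the resulting terms one at a time. The passage from a pointwise bound on rearrangements to the target norm is legitimized by rearrangement invariance together with the boundedness of the dilation operator $f(t)\mapsto f(ct)$ on every rearrangement-invariant space on $(0,\infty)$, which absorbs the constant $c=c(n)$ appearing in \eqref{rearr2u} and \eqref{rearr2gradu} into the final $C'=C'(n)$.

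For Part~(i) I would fix $u \in V^{2}X(\Omega)\cap \mathcal V^{1}U(\partial\Omega)\cap C_b(\overline\Omega)$, pick an arbitrary Hajlasz upper gradient $g_u \in U(\partial\Omega)$ of $u_{\partial\Omega}$, and take $\|\cdot\|_{\overline Y(0,\infty)}$ of both sides of \eqref{rearr2u}. Since $u^*_\mu$ vanishes outside $[0,\mu(\Omega))$, the left-hand side is comparable to $\|u\|_{Y(\Omega,\mu)}$, and the cut-off $\chi_{(0,\mu(\Omega))}(t)$ may be freely inserted on the right. The triangle inequality then splits the right-hand side into three groups: the first two terms, in which $|\nabla^{2}u|^{*}_{\mathcal L^n}$ appears, are handled by \eqref{red2.1bis} with $f = |\nabla^{2}u|^{*}_{\mathcal L^n}$; the third and fourth terms, involving $(g_u)^{*}_{\hh}$, by \eqref{red2.3bis} with $f = (g_u)^{*}_{\hh}$; the last, involving $(u_{\partial\Omega})^{*}_{\hh}$, by \eqref{red2.5} with $f = (u_{\partial\Omega})^{*}_{\hh}$. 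The cut-offs $\chi_{(0,\mathcal L^n(\Omega))}$ and $\chi_{(0,\hh(\partial\Omega))}$ present in those hypotheses are consistent with the supports of the corresponding rearrangements, so rewriting via \eqref{ri2} yields an inequality in which $\|g_u\|_{U(\partial\Omega)}$ appears in place of $\|u\|_{\mathcal V^{1}U(\partial\Omega)}$; taking the infimum over all admissible $g_u$ and recalling \eqref{semi} delivers \eqref{red2.6}.

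Part~(ii) follows the same scheme, but with only two groups of terms. Starting from \eqref{rearr2gradu}, one controls the $|\nabla^{2}u|^{*}_{\mathcal L^n}$-part via \eqref{red1bis} and the $(g_u)^{*}_{\hh}$-part via \eqref{red5}, and then takes the infimum over upper gradients to obtain \eqref{red6second}. The analytic substance of both parts is contained in Theorem~\ref{rearrest_2}; the rest is structural. The main obstacle is therefore purely one of bookkeeping: keeping careful track of the supports of the rearrangements so that the cut-offs in \eqref{red2.1bis}--\eqref{red2.5} (and \eqref{red1bis}--\eqref{red5}) can be inserted without loss, and absorbing the dilation constant $c(n)$ together with the triangle-inequality constants into a single dimensional factor $C'=C'(n)$.
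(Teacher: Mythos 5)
Your proposal captures the intended argument. The paper does not spell out its own proof of this theorem (it defers to \cite{CianchiMazya1, CianchiMazya2}), but the structure of Sections 3 and 4 makes clear that the reduction principles are meant to be deduced from the rearrangement estimates exactly as you describe: apply $\|\cdot\|_{\overline Y(0,\infty)}$ to both sides of \eqref{rearr2u} (resp.\ \eqref{rearr2gradu}), use the triangle inequality to split the right-hand side into the groups controlled by \eqref{red2.1bis}, \eqref{red2.3bis}, \eqref{red2.5} (resp.\ \eqref{red1bis}, \eqref{red5}), insert the cutoffs $\chi_{(0,\mu(\Omega))}$, $\chi_{(0,\mathcal L^n(\Omega))}$, $\chi_{(0,\hh(\partial\Omega))}$ consistently with the supports of $u^*_\mu$, $|\nabla^2 u|^*_{\mathcal L^n}$, $(g_u)^*_{\hh}$, $(u_{\partial\Omega})^*_{\hh}$, identify the one-dimensional norms with the corresponding function-space norms via \eqref{ri2}, and finally take the infimum over Hajlasz upper gradients to recover the seminorm \eqref{semi}. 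The dilation argument used to absorb the constant $c=c(n)$ is the correct mechanism; for any rearrangement-invariant space the dilation operator $f\mapsto f(\cdot/c)$ is bounded with norm at most $\max\{1,c\}$, so $\|u^*_\mu\|_{\overline Y}\leq\max\{1,c\}\,\|u^*_\mu(c\,\cdot)\|_{\overline Y}$, which is exactly what passes from the left-hand side of the rearrangement estimate to $\|u\|_{Y(\Omega,\mu)}$.

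One minor point you should keep in mind for completeness: the rearrangement estimates in Theorem~\ref{rearrest_2} carry, besides the dilation constant $c=c(n)$, a multiplicative constant $C=C(n,\alpha,C_\mu)$ on the right-hand side, which must also be absorbed into the final $C'$. Strictly speaking, this makes $C'$ a function of $n$, $\alpha$ and $C_\mu$ rather than of $n$ alone; the theorem's statement that $C'=C'(n)$ shares the same imprecision (it already appears in the first-order Theorem~\ref{reduction}), so your write-up is faithful to the paper's convention, but it is worth noting that the dependence on $\alpha$ and $C_\mu$ is not in fact eliminated by the argument.
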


\begin{rem} The expression appearing in each of the norms on the left-hand sides of inequalities  \eqref{red2.1bis}--\eqref{red2.5} is a nonnegative non-increasing function of $s$. Therefore, it agrees with its decreasing rearrangement. 
\end{rem}

\section{Inequalities for the symmetric gradient}

In this section we deal with Friedrichs type inequalities, in the spirit of those presented in Section \ref{S:introd}, involving just the symmetric gradient  $\mathcal E {\bf u}$ of functions   $ {\bf u} : \Omega \to \rn$.  
\\
In analogy with \eqref{sobolevV}, given a rearrangement-invariant space 
$X(\Omega)$,  we define the symmetric gradient Sobolev space as
\begin{equation}\label{EX}E^1X(\Omega) = \big\{{\bf u} \in L^1_{\rm loc}(\Omega): \, |\mathcal E {\bf u}| \in X(\Omega)\big\}.
\end{equation}
Interestingly, it turns out that the norms in the relevant symmetric gradient  inequalities are exactly the same as those entering their full gradient counterparts. As an example, we reproduce here the basic conclusions concerning functions in the space $E^1L^p(\Omega)$, with $p\in (1, \infty]$. A related result   can also be found in the recent paper \cite{ChMaz}.

\begin{thm}\label{fried_symm} {\bf [Symmetric gradient   inequalities]}
Let $\Omega$ be any open  set in $\rn$, $n \geq 2$ with $\mathcal L^n(\Omega)<\infty$ and $\hh (\Omega)< \infty$.  Assume that
$\mu$ is a   Borel measure in $\Omega$ fulfilling \eqref{measure} for
some $\alpha \in (n-1, n]$ and for some $C_\mu
>0$, and such that $ \mu (\Omega)< \infty$. 
\\ (i) 
Let $1<p<n$,  $r>1$, and let  $q$ be defined as in \eqref{q}.
Then there exists a constant 
$C=C(n,p, r, \alpha,  C_\mu)$ such that
\begin{align}\label{friedsymm.1}
\|{\bf u}\|_{L^q(\Omega, \mu)}& \leq C \Big(\max\{\mu(\Omega)^{\frac n\alpha}, \mathcal L^n(\Omega)\}   ^{\frac \alpha{qn}-\frac{n-p}{pn}} \|\mathcal E {\bf u}\|_{L^p(\Omega)}  \nonumber
\\ & \qquad + \max\{\mu(\Omega)^{\frac {n-1}\alpha}, \hh(\partial \Omega)\}  ^{\frac \alpha{q(n-1)}-\frac{1}{r}} \|{\bf u}\|_{L^r(\partial \Omega)}\Big)
\end{align}
for every ${\bf u} \in E^{1}L^p(\Omega ) \cap C_b(\overline \Omega )$. 
\\ (ii)
Let $\beta>0$,  and let $\gamma$ be defined as in \eqref{gamma}.
Then there exists a constant 
$C=C(n, 	\beta, \alpha,  C_\mu, \mathcal L^n (\Omega), \hh (\Omega), \mu(\Omega))$ such that
\begin{align}\label{friedsymm.3}
\|{\bf u}\|_{\exp L^\gamma (\Omega, \mu)} \leq C \Big( \|\mathcal E {\bf u}\|_{L^n(\Omega)} +  \|{\bf u}\|_{\exp L^\beta(\partial \Omega)}\Big)
\end{align}
for every ${\bf u} \in E^{1}L^n(\Omega ) \cap C_b(\overline \Omega )$. 
\\ (iii)
 Let $p
>n$. Then
 there
 exists a  constant $C=C(n, p)$
  such that
\begin{align}\label{infsymm}
\|{\bf u}\|_{L^\infty (\Omega )} & \leq C \Big(\mathcal L^n(\Omega)^{\frac 1n-\frac 1p}\|\mathcal E {\bf u}\|_{L^{p}(\Omega )}
 +     \|{\bf u}\|_{L^\infty(\partial \Omega)}\Big)
\end{align}
for every ${\bf u} \in V^{1}L^p(\Omega) \cap C_b(\overline \Omega)$.
\end{thm}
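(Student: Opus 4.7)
The strategy is to mirror the proof of Theorem~\ref{fried_p<n}, exploiting the observation already highlighted in the paper that the three families of norms appearing in Theorem~\ref{fried_symm} coincide with those in Theorem~\ref{fried_p<n}. The cornerstone of the approach is the symmetric-gradient analogue of the pointwise estimate~\eqref{point1.1}, established in \cite{CianchiMazya2}: for every $\mathbf{u} \in E^{1}L^1(\Omega) \cap C_b(\overline{\Omega})$ and every $x \in \Omega$,
\[
|\mathbf{u}(x)| \leq C\Bigg(\int_\Omega \frac{|\mathcal{E}\mathbf{u}(y)|}{|x-y|^{n-1}}\,dy + \int_{\mathbb{S}^{n-1}} |\mathbf{u}_{\partial\Omega}(\zeta(x,\vartheta))|\,d\mathcal{H}^{n-1}(\vartheta)\Bigg).
\]
This bound is formally identical to~\eqref{point1.1}, with $|\nabla u|$ replaced by $|\mathcal{E}\mathbf{u}|$; it can be produced by integrating along rays the identity $\partial_\vartheta(\mathbf{u}\cdot\vartheta) = \vartheta^{T}\mathcal{E}\mathbf{u}\,\vartheta$ and averaging over $\vartheta \in \mathbb{S}^{n-1}$, and is what bypasses the well-known failure of Korn-type inequalities on arbitrary open sets.

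Once this pointwise bound is in force, the remainder of the argument is a line-by-line transcription of the scheme developed in Section~\ref{1est}. The endpoint mapping properties of the two integral operators on the right-hand side depend only on their kernels, not on the specific nature of the integrand, so the same interpolation procedure that led from~\eqref{point1.1} to~\eqref{rearr1} produces the symmetric-gradient rearrangement estimate
\[
|\mathbf{u}|_\mu^*(ct) \leq C\Bigg(t^{-\frac{n-1}{\alpha}}\int_0^{t^{n/\alpha}} |\mathcal{E}\mathbf{u}|_{\mathcal{L}^n}^*(\rho)\,d\rho + \int_{t^{n/\alpha}}^\infty \rho^{-\frac{n-1}{n}}|\mathcal{E}\mathbf{u}|_{\mathcal{L}^n}^*(\rho)\,d\rho + t^{-\frac{n-1}{\alpha}}\int_0^{t^{(n-1)/\alpha}} (\mathbf{u}_{\partial\Omega})_{\mathcal{H}^{n-1}}^*(\rho)\,d\rho\Bigg).
\]
Feeding this estimate into the proof of Theorem~\ref{reduction} (whose argument is sensitive only to the shape of the rearrangement bound, not to what the underlying derivative represents) yields
\[
\|\mathbf{u}\|_{Y(\Omega,\mu)} \leq C'\bigl(C_1\|\mathcal{E}\mathbf{u}\|_{X(\Omega)} + C_2\|\mathbf{u}_{\partial\Omega}\|_{Z(\partial\Omega)}\bigr)
\]
whenever the triple $(X,Y,Z)$ satisfies the one-dimensional Hardy-type conditions~\eqref{red1bis} and~\eqref{red5}.

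It then remains only to verify those one-dimensional inequalities for the specific spaces appearing in the three parts and to track the explicit dependence on the geometric quantities $\mu(\Omega)$, $\mathcal{L}^n(\Omega)$ and $\mathcal{H}^{n-1}(\partial\Omega)$. For part~(i), this is done with $X = L^p$, $Y = L^q$ ($q$ as in~\eqref{q}), $Z = L^r$; for part~(ii), with $X = L^n$, $Y = \exp L^\gamma$, $Z = \exp L^\beta$; and for part~(iii), with $X = L^p$ ($p>n$) and $Y = Z = L^\infty$. In each case the Hardy estimates to be checked are precisely those already used in the corresponding part of Theorem~\ref{fried_p<n}, so the verifications and the resulting powers of $\mu(\Omega)$, $\mathcal{L}^n(\Omega)$ and $\mathcal{H}^{n-1}(\partial\Omega)$ transfer verbatim. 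The main obstacle, and the essential new ingredient, is therefore the derivation of the symmetric-gradient pointwise bound displayed above; once it is in place, the entire first-order machinery of Section~\ref{1est} applies without further change.
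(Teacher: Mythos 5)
Your proposal matches the paper's own route. The paper assembles exactly the machinery you invoke — the symmetric-gradient pointwise bound (Theorem~\ref{pointsymm}), the ensuing rearrangement estimate (Theorem~\ref{rearrest_1symm}), and the reduction principle (Theorem~\ref{reduction_symm}) — and then reduces the three parts of Theorem~\ref{fried_symm} to the same one-dimensional Hardy inequalities already used for the full-gradient case, so your line-by-line transcription of Section~\ref{1est}, including the sketch of how the ray-integration identity $\partial_\vartheta(\mathbf{u}\cdot\vartheta)=\vartheta^{T}\mathcal E\mathbf{u}\,\vartheta$ replaces Korn's inequality, is precisely the argument the authors intend.
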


\subsection{Symmetric-gradient pointwise estimates and reduction principle}\label{symmest}

 Our approach to Friedrichs inequalities in the spaces $E^1X(\Omega)$ is grounded on
pointwise and rearrangement bounds in terms of the symmetric gradient. They  parallel those presented in Section  \ref{1est} for the full gradient and are exposed in the next two theorems.

\begin{thm}\label{pointsymm}
{\bf [Symmetric gradient pointwise estimate]} Let $\Omega$ be any  open set in $\rn$,
$n \geq 2$.  Then there exists a constant $C=C(n)$ such that
\begin{align}\label{point1sym}
|{\bf u}(x)| & \leq C \Bigg(\int _\Omega \frac{|\mathcal E {\bf u}(y)|}{|x-y|^{n-1}}\, dy +    \int _{\mathbb S^{n-1}}|{\bf u}_{\partial \Omega}(\zeta (x, \vartheta ))|\,
d\hh (\vartheta )\Bigg) \qquad \hbox{for  $x \in \Omega$,}
\end{align}
for every  ${\bf u} \in E^{1 }L^1(\Omega ) \cap C_{\rm
b}(\overline \Omega )$. Here,   convention
\eqref{conv} is adopted.
\end{thm}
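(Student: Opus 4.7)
The plan is to follow the proof template of Theorem \ref{point1}, adapting it to the fact that only the symmetric part of $\nabla{\bf u}$ is available. The point of departure is the algebraic identity
\begin{equation*}
\partial_r\bigl({\bf u}(x+r\vartheta)\cdot\vartheta\bigr)=\vartheta_i\vartheta_j\,\partial_ju_i(x+r\vartheta)=\bigl(\mathcal E{\bf u}(x+r\vartheta)\,\vartheta\bigr)\cdot\vartheta,
\end{equation*}
valid for any smooth vector field ${\bf u}$, any $x\in\Omega$, and any $\vartheta\in\mathbb{S}^{n-1}$. The second equality holds because the quadratic form $\vartheta_i\vartheta_j\,\partial_ju_i$ is automatically symmetric in $i,j$ and therefore coincides with $\vartheta_i\vartheta_j(\mathcal E{\bf u})_{ij}$. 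Although $\mathcal E{\bf u}$ contains strictly less information than $\nabla{\bf u}$, it is precisely what is needed to control the directional derivative, along $\vartheta$, of the scalar function $r\mapsto{\bf u}(x+r\vartheta)\cdot\vartheta$.

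The next step is an integration along each ray issuing from $x$. By the fundamental theorem of calculus, and thanks to convention \eqref{conv} together with the fact that ${\bf u}$ has bounded support (so that the boundary term vanishes for rays that never meet $\partial\Omega$), one obtains
\begin{equation*}
{\bf u}(x)\cdot\vartheta={\bf u}_{\partial\Omega}(\zeta(x,\vartheta))\cdot\vartheta-\int_0^{\tau(x,\vartheta)}\bigl(\mathcal E{\bf u}(x+r\vartheta)\,\vartheta\bigr)\cdot\vartheta\,dr,
\end{equation*}
where $\tau(x,\vartheta)\in(0,\infty]$ is the parameter at which the ray first meets $\partial\Omega$. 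Only the scalar projection ${\bf u}(x)\cdot\vartheta$ has been recovered at this stage; to pass to the full vector I would multiply both sides by $\vartheta$ and integrate in $\vartheta$ over $\mathbb{S}^{n-1}$, exploiting the elementary tensor identity
\begin{equation*}
\int_{\mathbb{S}^{n-1}}\vartheta_i\vartheta_j\,d\hh(\vartheta)=\frac{|\mathbb{S}^{n-1}|}{n}\,\delta_{ij},
\end{equation*}
which turns the left-hand side into a positive multiple of ${\bf u}(x)$.

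To conclude, one takes absolute values, uses the trivial bounds $|(\mathcal E{\bf u}\,\vartheta)\cdot\vartheta|\leq|\mathcal E{\bf u}|$ and $|({\bf u}_{\partial\Omega}\cdot\vartheta)\vartheta|\leq|{\bf u}_{\partial\Omega}|$, and converts the double integral over $\{(\vartheta,r):\vartheta\in\mathbb{S}^{n-1},\,0<r<\tau(x,\vartheta)\}$ into a Cartesian integral over $\Omega$ via $y=x+r\vartheta$, $dy=r^{n-1}\,dr\,d\hh(\vartheta)$. This produces the Riesz-potential term $\int_\Omega|\mathcal E{\bf u}(y)|\,|x-y|^{1-n}\,dy$ and delivers the desired inequality \eqref{point1sym}. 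Extension to a general ${\bf u}\in E^{1}L^{1}(\Omega)\cap C_b(\overline\Omega)$ follows by a standard approximation argument, mollifying ${\bf u}$ on compact subsets of $\Omega$ and passing to the limit.

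The main obstacle I foresee is the passage from the one-dimensional identity for ${\bf u}(x)\cdot\vartheta$ to a pointwise bound for the full vector ${\bf u}(x)$: pointwise knowledge of $\mathcal E{\bf u}$ leaves an antisymmetric ambiguity in $\nabla{\bf u}$, so the argument must globally couple distinct directions. The averaging over $\mathbb{S}^{n-1}$ via the tensor identity $\int\vartheta\otimes\vartheta\,d\hh=\tfrac{|\mathbb{S}^{n-1}|}{n}I$ is precisely the mechanism that breaks this degeneracy. A subsidiary technical point is to ensure that the ACL-on-lines property used to justify the fundamental theorem of calculus step remains valid under the weaker hypothesis $|\mathcal E{\bf u}|\in L^{1}(\Omega)$ rather than $|\nabla{\bf u}|\in L^{1}(\Omega)$, which is handled by the approximation step.
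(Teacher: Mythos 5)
Your proof is correct and coincides with the approach used for this estimate (the paper defers the proof to \cite{CianchiMazya2}): the directional identity $\partial_r\bigl({\bf u}(x+r\vartheta)\cdot\vartheta\bigr)=\bigl(\mathcal E{\bf u}(x+r\vartheta)\,\vartheta\bigr)\cdot\vartheta$, integration along rays with the boundary term supplied by convention \eqref{conv}, and averaging over $\mathbb{S}^{n-1}$ via $\int_{\mathbb{S}^{n-1}}\vartheta\otimes\vartheta\,d\hh=\tfrac{|\mathbb{S}^{n-1}|}{n}I$ to reconstruct ${\bf u}(x)$ and produce the Riesz potential through polar coordinates. The technical point you flag—validity of the one-dimensional fundamental theorem of calculus for the ray restriction of ${\bf u}\cdot\vartheta$ when only $\mathcal E{\bf u}\in L^1(\Omega)$—is the only delicate step, and it is handled exactly as you indicate, by noting that the distributional $\vartheta$-directional derivative of ${\bf u}\cdot\vartheta$ equals $(\mathcal E{\bf u}\,\vartheta)\cdot\vartheta\in L^1$, which together with the continuity ${\bf u}\in C_b(\overline\Omega)$ yields absolute continuity along almost every ray issuing from $x$.
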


\begin{thm}\label{rearrest_1symm} {\bf [Symmetric gradient rearrangement estimate]}
Let $\Omega$ be any open  set in $\rn$, $n \geq 2$.   Assume that
$\mu$ is a Borel measure in $\Omega$ fulfilling \eqref{measure} for
some $\alpha \in (n-1, n]$ and for some $C_\mu
>0$. Then there exist  constants $c=c(n)$ and $C=C(n,\alpha ,
C_\mu)$ such that
\begin{align}\label{rearrsym}
|{\bf u}|_\mu^*(ct)  & \leq C\Bigg(t^{-\frac
{n-1}{\alpha}}\int _0^{t^{\frac n\alpha}}|\mathcal E {\bf u}|_{\mathcal
L^n}^*(\rho) d\rho + \int _{t^{\frac n\alpha}}^\infty \rho^{-\frac{n-1}n}
|\mathcal E {\bf u}|_{\mathcal L^n}^*(\rho) d\rho  \nonumber \\  
& \quad \quad \quad + t^{-\frac {n-1}{\alpha}}\int _0^{t^{\frac {n-1}\alpha}}
|{\bf u}_{\partial \Omega}|_{\hh}^*(\rho)d\rho\Bigg)\quad
\quad \hbox{for $t>0$,}
\end{align}
for every ${\bf u} \in E^{1}L^1(\Omega ) \cap C_{\rm
b}(\overline \Omega )$.  
\end{thm}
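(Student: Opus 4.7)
The proof strategy is to start from the pointwise estimate \eqref{point1sym} provided by Theorem \ref{pointsymm} and then follow, almost verbatim, the path leading from \eqref{point1.1} to \eqref{rearr1} in Theorem \ref{rearrest_1}. The key observation is that the right-hand side of \eqref{point1sym} is formally identical to that of \eqref{point1.1}, with $|\mathcal E {\bf u}|$ playing the role of $|\nabla u|$, so the analysis of the relevant integral operators is entirely independent of whether one deals with the symmetric or the full gradient.

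Concretely, I would first decompose the bound in \eqref{point1sym} by introducing the Riesz-type contribution
$$I_1(x) = \int_\Omega \frac{|\mathcal E {\bf u}(y)|}{|x-y|^{n-1}}\, dy$$
and the boundary contribution
$$I_2(x) = \int_{\mathbb S^{n-1}} |{\bf u}_{\partial \Omega}(\zeta(x, \vartheta))|\, d\hh(\vartheta),$$
and apply the elementary sub-additivity $(I_1+I_2)_\mu^*(2t) \leq (I_1)_\mu^*(t) + (I_2)_\mu^*(t)$, which reduces the problem to estimating the $\mu$-rearrangements of $I_1$ and $I_2$ in isolation.

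Next, I would establish rearrangement bounds for each of $I_1$ and $I_2$ separately. For $I_1$, the endpoint mapping properties of the Riesz potential of order one, viewed as acting from Lebesgue classes on $\Omega$ into Lorentz classes on $(\Omega, \mu)$, yield, under the Ahlfors regularity condition \eqref{measure}, the bound
$$(I_1)_\mu^*(c_1 t) \leq C \Bigg( t^{-\tfrac{n-1}{\alpha}} \int_0^{t^{n/\alpha}} |\mathcal E {\bf u}|_{\mathcal L^n}^*(\rho)\, d\rho + \int_{t^{n/\alpha}}^\infty \rho^{-\tfrac{n-1}{n}} |\mathcal E {\bf u}|_{\mathcal L^n}^*(\rho)\, d\rho \Bigg),$$
which is the standard O'Neil-type estimate derived by interpolating the weak endpoint $L^1 \to L^{\alpha/(n-1),\infty}(\Omega,\mu)$ against the trivial $L^\infty \to L^\infty$ bound on the tail. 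For $I_2$, the corresponding endpoint is a weak-type inequality from $L^1(\partial \Omega)$ into $L^{\alpha/(n-1),\infty}(\Omega, \mu)$, obtained by parametrising $\mathbb S^{n-1}$ through the boundary via $\zeta$ and exploiting \eqref{measure}; real interpolation with the trivial $L^\infty \to L^\infty$ bound then delivers
$$(I_2)_\mu^*(c_2 t) \leq C\, t^{-\tfrac{n-1}{\alpha}} \int_0^{t^{(n-1)/\alpha}} |{\bf u}_{\partial \Omega}|_{\hh}^*(\rho)\, d\rho.$$
Combining these two displays with the sub-additivity step yields \eqref{rearrsym}, with $c=\tfrac12\min\{c_1, c_2\}$ and a constant $C=C(n,\alpha,C_\mu)$.

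The main technical obstacle is the weak-type endpoint for $I_2$: the map $\zeta$ is not a convolution, and to pass from integration over $\mathbb S^{n-1}$ to the intrinsic surface measure $\hh$ on $\partial \Omega$ one must control Jacobian-type factors quantitatively via the Ahlfors regularity \eqref{measure}, typically by slicing $\Omega$ with rays through boundary points and estimating $\mu$ of conical neighbourhoods. Fortunately, this analysis has already been carried out in the course of establishing Theorem \ref{rearrest_1}, and it transports verbatim to the present setting, because the operator $I_2$ and its endpoint estimates depend only on the scalar trace $|{\bf u}_{\partial \Omega}|$ and not on whether the underlying function is scalar- or vector-valued, nor on which gradient drives the Riesz term $I_1$.
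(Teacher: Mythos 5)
Your proposal is correct and follows precisely the route the paper intends: the survey explicitly states that the symmetric-gradient results ``parallel those presented in Section 3.2 for the full gradient,'' and your observation that the right-hand side of \eqref{point1sym} is formally identical to that of \eqref{point1.1}, so that the endpoint estimates for $I_1$ (Riesz potential) and $I_2$ (boundary operator) transport verbatim once $|\nabla u|$ is replaced by $|\mathcal E {\bf u}|$, is exactly the point. One small slip: since decreasing rearrangements are non-increasing, after the sub-additivity step $(I_1+I_2)_\mu^*(2s)\le (I_1)_\mu^*(s)+(I_2)_\mu^*(s)$ you need $c$ large enough that $ct/2 \ge c_j t$ for $j=1,2$, so the correct choice is $c = 2\max\{c_1,c_2\}$ rather than $\tfrac12\min\{c_1,c_2\}$; this does not affect the substance of the argument.
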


As a consequence of Theorem \ref{rearrest_1symm}, a reduction principle for symmetric gradient inequalities takes exactly the same form as that stated in Theorem \ref{reduction}.

\begin{thm}\label{reduction_symm} {\bf [Symmetric gradient reduction principle]}
Let $\Omega$ be any  open set in $\rn$, $n \geq 2$. Assume that
$\mu$ is a measure in $\Omega$ fulfilling \eqref{measure} for some
$\alpha \in (n-1, n]$, and for some constant $C_\mu$. Let $X(\Omega)$, $Y(\Omega , \mu)$ and
 $Z(\partial \Omega)$  be rearrangement-invariant spaces such that inequalities \eqref{red1bis}--\eqref{red5} hold for some constants $C_1$ and $C_2$.
 Then
\begin{align}\label{redsym}
\|{\bf u}\|_{Y(\Omega, \mu)} & \leq C' \Big(C_1\|\mathcal E 
{\bf u}\|_{X(\Omega )} +   C_2 \|{\bf u}_{\partial \Omega}\|_{ Z(\partial \Omega )}\Big)
\end{align}
for some
constant $C'=C'(n)$ and 
 for every ${\bf u} \in {E^{1}X(\Omega )
\cap C_{\rm b}(\overline \Omega )}$.
\end{thm}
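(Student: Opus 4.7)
The plan is to mirror the proof of the full-gradient reduction principle (Theorem \ref{reduction}), exploiting the crucial observation that the symmetric gradient rearrangement estimate \eqref{rearrsym} of Theorem \ref{rearrest_1symm} has \emph{exactly} the same structure as the full-gradient estimate \eqref{rearr1} in Theorem \ref{rearrest_1}, with $|\nabla u|_{\mathcal L^n}^*$ replaced by $|\mathcal E {\bf u}|_{\mathcal L^n}^*$. Since the hypotheses \eqref{red1bis}--\eqref{red5} are expressed purely in terms of one-dimensional Hardy-type operators acting on nonincreasing functions, the argument transfers verbatim.

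First, I would fix ${\bf u} \in E^{1}X(\Omega) \cap C_b(\overline\Omega)$ and apply Theorem \ref{rearrest_1symm} to obtain the pointwise bound, for $t>0$,
\begin{align*}
|{\bf u}|_\mu^*(ct) & \leq C\Bigg(t^{-\frac{n-1}{\alpha}}\int_0^{t^{\frac n\alpha}}|\mathcal E {\bf u}|_{\mathcal L^n}^*(\rho)\,d\rho + \int_{t^{\frac n\alpha}}^\infty \rho^{-\frac{n-1}{n}}|\mathcal E {\bf u}|_{\mathcal L^n}^*(\rho)\,d\rho \\
& \qquad\qquad + t^{-\frac{n-1}{\alpha}}\int_0^{t^{\frac{n-1}\alpha}}|{\bf u}_{\partial\Omega}|_{\hh}^*(\rho)\,d\rho\Bigg).
\end{align*}
Note that $|\mathcal E {\bf u}|_{\mathcal L^n}^*$ is supported in $[0,\mathcal L^n(\Omega)]$ and $|{\bf u}_{\partial\Omega}|_{\hh}^*$ is supported in $[0,\hh(\partial\Omega)]$, so the characteristic functions appearing in \eqref{red1bis}--\eqref{red5} do not alter either integrand; moreover, the whole bound is trivial when $t \geq \mu(\Omega)$ since $|{\bf u}|_\mu^*(ct)$ vanishes there, so inserting $\chi_{(0,\mu(\Omega))}(t)$ on the right loses nothing.

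Next, I would take the $\overline Y(0,\infty)$-norm of both sides. The left-hand side equals $\||{\bf u}|_\mu^*(c\cdot)\|_{\overline Y(0,\infty)}$, which by the boundedness of dilations on rearrangement-invariant spaces is comparable to $\||{\bf u}|_\mu^*\|_{\overline Y(0,\infty)} = \|{\bf u}\|_{Y(\Omega,\mu)}$, up to a multiplicative factor depending only on $c=c(n)$. On the right-hand side, the triangle inequality splits the sum into three $\overline Y$-norms. The first two combine into the left-hand side of \eqref{red1bis} with $f = |\mathcal E {\bf u}|_{\mathcal L^n}^*$, and hypothesis \eqref{red1bis} bounds this by $C_1\||\mathcal E {\bf u}|_{\mathcal L^n}^*\|_{\overline X(0,\infty)} = C_1\|\mathcal E {\bf u}\|_{X(\Omega)}$, where the last equality follows from the representation formula \eqref{ri2}. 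The third term matches the left-hand side of \eqref{red5} with $f = |{\bf u}_{\partial\Omega}|_{\hh}^*$, and hypothesis \eqref{red5} bounds it by $C_2\|{\bf u}_{\partial\Omega}\|_{Z(\partial\Omega)}$, again via \eqref{ri2}. Assembling these three estimates yields \eqref{redsym}.

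There is no real obstacle here beyond bookkeeping: the only mildly delicate step is controlling the dilation $t \mapsto ct$ on $\overline Y$, which is standard for rearrangement-invariant norms and produces a constant absorbed into $C'=C'(n)$. The substance of the result sits entirely in the symmetric gradient rearrangement estimate of Theorem \ref{rearrest_1symm}; once that is in hand, the reduction to one-dimensional inequalities is formally identical to the full-gradient case.
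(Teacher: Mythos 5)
Your proof is correct and follows exactly the route the paper indicates: it mirrors the derivation of the full-gradient reduction principle (Theorem \ref{reduction}) from the rearrangement estimate (Theorem \ref{rearrest_1}), simply replacing $|\nabla u|$ by $|\mathcal E {\bf u}|$ throughout and invoking Theorem \ref{rearrest_1symm} in place of Theorem \ref{rearrest_1}; the paper itself states that Theorem \ref{reduction_symm} follows ``as a consequence of Theorem \ref{rearrest_1symm}'' by the same argument, referring to \cite{CianchiMazya1, CianchiMazya2} for details. The handling of the dilation $t\mapsto ct$ and of the characteristic-function insertions is the right bookkeeping and matches the intended argument.
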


\section{Sketches of proofs}\label{proofs}

We conclude by outlining the proofs of Theorems \ref{fried2_p<n/2} and \ref{compact2_sub}. This should  help the reader grasp  methods to derive    inequalities  and   compactness results    via our reduction principles. For  proofs of the latter we refer to the papers \cite{CianchiMazya1, CianchiMazya2}.

\begin{proof}
[Proof of Theorem \ref{fried2_p<n/2}] \emph{Part (i)} By Theorem \ref{reduction2} and a change of variables, inequality \eqref{fried4} will follow if we show that:
\begin{multline}\label{fried35}
\Bigg\| t^{-\frac {n-2}{n}+(\frac \alpha n-1)\frac 1q}\int _0^{t}f(\rho) d\rho \Bigg\|_{L^q(0, \ell_1)}   +  \Bigg\| t^{(\frac \alpha n-1)\frac 1q}\int _{t}^{\ell_1} \rho^{-\frac{n-2}n}
f(\rho) d\rho\Bigg\|_{L^q(0, \ell_1)} \\ \leq C_1
\|f\|_{ L^p(0,\ell_1)}
\end{multline}
for every non-increasing function $f: (0,\ell_1) \to [0, \infty)$, where $\ell_1= \max\{\mu(\Omega)^{\frac n\alpha}, \mathcal L^n(\Omega)\}$, $C_1 = c \ell_1^{\frac \alpha{qn}-\frac{n-2p}{pn}}$ and $c=c(n, p, r, s, \alpha, C_\mu)$;
\begin{multline}\label{fried37}
\Bigg\|t^{-\frac {n-2}{n-1}+(\frac \alpha {n-1}-1)\frac 1q}\int _0^{t}
f(\rho)d\rho \Bigg\|_{L^q(0, \ell_2 )}+ \Bigg\|t^{(\frac \alpha {n-1}-1)\frac 1q} \int_{t}^{\ell_2}  \rho^{-\frac {n-2}{n-1}}
f(\rho)d\rho \Bigg\|_{L^q(0, \ell_2)} \\ \leq C_2
\|f\|_{L^s(0, \ell_2)} 
\end{multline}
for every non-increasing function $f: (0,\ell_2) \to [0, \infty)$, where $\ell_2=  \max\{\mu(\Omega)^{\frac {n-1}\alpha}, \hh(\partial \Omega)\} $, $C_2 = c \ell_2^{\frac \alpha{q(n-1)}-\frac{n-1-s}{s(n-1)}}$ and $c=c(n,p, r, s, \alpha,  C_\mu)$;
\begin{equation}\label{fried39}
\Bigg\| t^{-1+(\frac \alpha {n-1}-1)\frac 1q}\int _0^{t}
f(\rho)\, d\rho \Bigg\|_{L^q(0, \ell_2)} \leq C_3
\|f\|_{L^r(0, \ell_2)}
\end{equation}
for every non-increasing function $f: (0,\ell_2) \to [0, \infty)$, where  $C_3 = c \ell_2^{\frac \alpha{q(n-1)}-\frac 1r}$ and $c=c(n,p, r, s, \alpha,  C_\mu)$.
\\ Inequalities  \eqref{fried35}--\eqref{fried39}  can be established
via standard criteria for one-dimensional Hardy type inequalities  -- see e.g. \cite[Section 1.3.2]{Mabook}. 
\\ \emph{Part (ii)} Owing to Theorem \ref{reduction2}, the subsequent remark,  equation \eqref{equivexp}, and a change of variables, the proof of inequality \eqref{fried5} is reduced to showing that: 
\begin{multline}\label{fried30}
\Bigg\| \Bigg(t^{-\frac {n-2}{n}}\int _0^{t}f(\rho) d\rho \Bigg)\log^{-\frac 1\gamma}\bigg(1+ \frac {\ell_1}{t^{\frac \alpha n}}\bigg) \Bigg\|_{L^\infty(0, \ell_1)} \\ + 
\Bigg\|\Bigg( \int _{t }^{\ell_1} \rho^{-\frac{n-2}n}
f(\rho) d\rho\Bigg) \log^{-\frac 1\gamma}\bigg(1+ \frac {\ell_1}{t^{\frac \alpha n}}\bigg)\Bigg\|_{L^\infty(0, \ell_1)}
\leq C_1
\|f\|_{ L^{\frac n2}(0,\ell_1)}
\end{multline}
for every non-increasing function $f: (0,\ell_1) \to [0, \infty)$ and for some constant 
$C_1=C_1(n,\alpha,   C_\mu, \mu(\Omega), \mathcal L^n(\Omega) )$;
\begin{multline}\label{fried32}
\Bigg\|\Bigg(t^{-\frac {n-2}{n-1}}\int _0^{t}
f(\rho)d\rho \Bigg) \log^{-\frac 1\gamma}\bigg(1+ \frac {\ell_2}{t^{\frac \alpha {n-1}}}\bigg)\Bigg\|_{L^\infty(0, \ell_2 )}\\
+ \Bigg\| \Bigg(\int_{t}^{\ell_2}  r^{-\frac {n-2}{n-1}}
f(r)dr \Bigg) \log^{-\frac 1\gamma}\bigg(1+ \frac {\ell_2}{t^{\frac \alpha {n-1}}}\bigg)\Bigg\|_{L^\infty(0, \ell_2)}
 \leq C_2
\|f\|_{L^s(0, \ell_2)} 
\end{multline}
for every non-increasing function $f: (0,\ell_2) \to [0, \infty)$   and for some constant 
$C_2=C_2(n,\alpha, s, C_\mu, \mu(\Omega), \hh(\partial \Omega) )$;
\begin{equation}\label{fried34}
\Bigg\| \Bigg(t^{-1}\int _0^{t }
f(\rho)\, d\rho \Bigg)\log^{-\frac 1\gamma}\bigg(1+ \frac {\ell_2}{t^{\frac \alpha {n-1}}}\bigg)\Bigg\|_{L^\infty(0, \ell_2)} \leq C_3
\bigg\|f (t)\log^{-\frac 1\gamma}\bigg(1+ \frac {\ell_2}{t^{\frac \alpha {n-1}}}\bigg)\bigg\|_{L^\infty(0, \ell_2 )}
\end{equation}
for every non-increasing function $f: (0,\ell_2) \to [0, \infty)$  and for some constant 
$C_3=C_3(n, r, \alpha,  C_\mu, \mu(\Omega), \hh(\partial \Omega) )$.
\\ Inequalities  \eqref{fried30}--\eqref{fried34}  follow via the criteria for weighted one-dimensional Hardy type inequalities  mentioned above. 
\\ \emph{Part (iii)} The proof of inequality \eqref{fried7} is analogous to that of inequality \eqref{fried5}. One has just  to set $\mu=\mathcal L^n$, $\alpha =n$, $q=\infty$ in inequalities \eqref{fried35}--\eqref{fried39}, and derive the correct dependence of the constants $C_1$, $C_2$ and $C_3$ from appropriate results for   Hardy type inequalities  \cite[Section 1.3.2]{Mabook}. 
\end{proof}

\begin{proof}
[Proof of Theorem \ref{compact2_sub}] Part (i). Fix any $\varepsilon >0$.
Then, there exists a compact set $K\subset \Omega$ such that $\mu
(\Omega \setminus K)< \varepsilon$.  Let $\xi \in C_0^\infty
(\Omega)$ be such that $0 \leq \xi \leq 1$, $\xi = 1$ in
$K$. Thus, $K \subset  {\rm supp} (\xi)$, the support of
$\xi$, and hence $\mu ({\rm supp} (1-\xi )) \leq \mu(\Omega \setminus K)<
\varepsilon$.
 Let $\Omega '$ be an open set, with a smooth boundary,
such that ${\rm supp}(\xi) \subset \Omega ' \subset \Omega$.
 Let $\{u_k\}$ be a bounded sequence in the space 
${V^{2}L^p(\Omega ) \cap \mathcal V^{1}L^s(\partial \Omega) \cap C _{\rm b}(\overline
\Omega )}$.
Thus, owing to an application of Theorem \ref{fried2_p<n/2},  with $\mu = \mathcal
L^n$, such a sequence  is also bounded in the standard Sobolev space
$W^{{2},p}(\Omega ')$. A weighted version of Rellich's
compactness theorem \cite[Theorem 1.4.6/1]{Mabook}, ensures that 
$\{ u_k\}$ is a Cauchy sequence in $L^{q}(\Omega ', \mu)$. As a consequence,
 there exists $k_0 \in \mathbb N$ such that
\begin{equation}\label{comp1}
\| u_k -  u_j\|_{L^q(\Omega ', \mu)}<\varepsilon
\end{equation}
if $k, j > k_0$. Now, denote by $\widehat q$ the minimum in equation \eqref{compnew3}.  By H\"older's inequality,
\begin{align}\label{comp2}
\|(1-\xi ) (u_k - u_j)\|_{L^q(\Omega, \mu)} 
\leq \|  u_k -   u_j\|_{L^{\widehat q}(\Omega, \mu)} \mu ({\rm supp} (1-\xi
))^{\frac 1q - \frac 1{\widehat q} } 
 \leq C\varepsilon^{\frac 1q - \frac 1{\widehat q} }
\end{align}
for some constant $C$  independent of $k$ and $j$. Inequalities
\eqref{comp1} and \eqref{comp2} imply  that
\begin{equation}\label{comp3}
\|u_k -  u_j\|_{L^q(\Omega , \mu)} \leq \| u_k -  u_j\|_{L^q(\Omega ', \mu)} +  \|(1-\xi
)( u_k-   u_j)\|_{L^q(\Omega, \mu)} \leq
\varepsilon + C\varepsilon^{\frac 1q - \frac 1{\widehat q} }
\end{equation}
if $k, j > k_0$. Owing to the arbitrariness of $\varepsilon$,
inequality \eqref{comp3} tells us that $\{ u_k\}$ is a
Cauchy sequence in $L^{q}(\Omega , \mu)$. 
\\ Part (ii) Given   $\varepsilon >0$, let $K$, $\Omega$, $\xi$ as above, and let $\{u_k\}$ be a bounded sequence in the space 
$ {V^{2}L^{\frac n2}(\Omega ) \cap \mathcal V^{1}L^s(\partial \Omega) \cap C _{\rm b}(\overline
\Omega )}$. By Theorem \ref{fried2_p<n/2},  with $\mu = \mathcal
L^n$, the sequence  $\{u_k\}$ is   bounded in  
$W^{{2},\frac n2}(\Omega ')$ as well. From \cite[Theorem 5.3, Part (ii)]{CavMih} we hence deduce that  $\{u_k\}$ is a Cauchy sequence in the space $\exp L^\gamma (\Omega', \mu)$. Note that, as observed in  \cite{CavMih}, the  theorem in question, although stated for the space $W^{{2},\frac n2}_0(\Omega ')$, continues to hold for $W^{{2},\frac n2}(\Omega ')$ if $\Omega'$ is regular enough. Therefore
there exists $k_0 \in \mathbb N$ such that
\begin{equation}\label{compexp1}
\| u_k -  u_j\|_{\exp L^\gamma (\Omega', \mu)}<\varepsilon
\end{equation}
if $k, j > k_0$. Call $\widehat \gamma$  the minimum in equation \eqref{compnew4}. On making use of a  version of H\"older's inequality in Orlicz spaces one can deduce that
\begin{align}\label{compexp2}
\|(1-\xi )  (u_k - u_j)\|_{\exp L^\gamma (\Omega', \mu)} 
& \leq C \|  u_k -   u_j\|_{\exp L^{\widehat \gamma} (\Omega', \mu)}  \log^{\frac 1 \gamma   -\frac 1 {\widehat \gamma}}\Big(1+ \tfrac 1{\mu ({\rm supp} (1-\xi
)) }\Big) \nonumber
  \\   & \leq C'  \log^{\frac 1 \gamma   -\frac 1 {\widehat \gamma}}\big(1+ \tfrac 1 \varepsilon\big)
\end{align}
for some constants $C$ and $C'$  independent of $k$ and $j$.  Owing to the arbitrariness of $\varepsilon$,
inequalities \eqref{compexp1} and \eqref{compexp2} tell us that $\{ u_k\}$ is a
Cauchy sequence in $\exp L^\gamma (\Omega', \mu)$. 
\end{proof}

\noindent

\begin{acknowledgments}
This research was partly supported by:  
 Research Project 2201758MTR2  of the Italian Ministry of University and
Research (MIUR) Prin 2017 ``Direct and inverse problems for partial differential equations: theoretical aspects and applications''; 
 GNAMPA of the Italian INdAM -- National Institute of High Mathematics
(grant number not available); RUDN University Program 5-100
\end{acknowledgments}

\small

\end{document}